\definecolor{dunkelgrau}{rgb}{0.8,0.8,0.8}
\definecolor{hellgrau}{rgb}{0.9,0.9,0.9}
\newcommand{\R}{\ensuremath{\mathbb{R}}}
\newcommand{\N}{\ensuremath{\mathbb{N}}}
\newcommand{\oR}{\ensuremath{\overline{\mathbb{R}}}}
\newcommand{\D}{\ensuremath{\mathcal{D}}}
\renewcommand{\>}{\right\rangle}
\newcommand{\<}{\left\langle}
\newcommand{\id}{\ensuremath{\text{Id}}}
\newcommand{\bx}{\ensuremath{\overline{x}}}
\newcommand{\by}{\ensuremath{\overline{y}}}
\newcommand{\bz}{\ensuremath{\overline{z}}}
\newcommand{\bv}{\ensuremath{\overline{v}}}
\newcommand{\bq}{\ensuremath{\overline{q}}}
\newcommand{\bp}{\ensuremath{\overline{p}}}
\renewcommand{\bv}{\ensuremath{\overline{v}}}
\newcommand{\f}{\ensuremath{\boldsymbol}}
\newcommand{\wt}{\ensuremath{\widetilde}}
\newcommand{\fx}{\ensuremath{\boldsymbol{x}}}
\newcommand{\fv}{\ensuremath{\boldsymbol{v}}}
\newcommand{\fq}{\ensuremath{\boldsymbol{q}}}
\newcommand{\fy}{\ensuremath{\boldsymbol{y}}}
\newcommand{\fK}{\ensuremath{\boldsymbol{\mathcal{K}}}}
\newcommand{\I}{\ensuremath{\mathcal{I}}}
\newcommand{\g}{\ensuremath{\mathcal{G}}}
\newcommand{\h}{\ensuremath{\mathcal{H}}}
\newcommand{\X}{\ensuremath{\mathcal{X}}}
\newcommand{\Y}{\ensuremath{\mathcal{Y}}}
\newcommand{\proj}{\ensuremath{\mathcal{P}}}
\renewcommand{\Box}{\ensuremath{\mbox{\small$\,\square\,$}}}
\renewcommand{\@fnsymbol}[1]{\@arabic{#1}}  %arabic
\theoremstyle{plain}
\newtheorem{theorem}{Theorem}[section]
\newtheorem{proposition}[theorem]{Proposition}
\theoremstyle{definition}
\newtheorem{remark}{Remark}[section]
\newtheorem{problem}{Problem}[section]
\newtheorem{algorithm}{Algorithm}[section]
\DeclareMathOperator*\dom{dom}%
\DeclareMathOperator*\ri{ri}%
\DeclareMathOperator*\sqri{sqri}%
\DeclareMathOperator*\gra{gra}%
\DeclareMathOperator*\argmin{arg\,min}%
\DeclareMathOperator*\cl{cl}%
\DeclareMathOperator*\ran{ran}%
\DeclareMathOperator*\zer{zer}%
\DeclareMathOperator*\Prox{Prox}%
\numberwithin{equation}{section}  %Setzt den equation-Zaehler nach jeder Section zurueck
\title{Solving monotone inclusions involving parallel sums of linearly composed maximally monotone operators}
\author{Radu Ioan Bo\c t
\thanks {University of Vienna, Faculty of Mathematics, Nordbergstra\ss e 15, A-1090 Vienna, Austria, e-mail: radu.bot@univie.ac.at. Research partially supported by DFG (German Research Foundation), project BO 2516/4-1.}
\and Christopher Hendrich
\thanks{Chemnitz University of Technology, Department of Mathematics, D-09107 Chemnitz, Germany, e-mail: christopher.hendrich@mathematik.tu-chemnitz.de. Research supported by a Graduate Fellowship of the Free State Saxony, Germany.}
}
\date{\today}
\begin{document}
\maketitle

{\bf Abstract.} The aim of this article is to present two different primal-dual methods for solving structured monotone inclusions involving parallel sums of compositions of maximally monotone operators with linear bounded operators. By employing some elaborated splitting techniques, all of the operators occurring in the problem formulation are processed individually via forward or backward steps. The treatment of parallel sums of linearly composed maximally monotone operators is motivated by applications in imaging which involve first- and 
second-order total variation functionals, to which a special attention is given.

{\bf Keywords.} monotone inclusion, infimal convolution, parallel sum, Fenchel duality, convex optimization, primal-dual algorithm

{\bf AMS subject classification.} 90C25, 90C46, 47A52

\section{Introduction}\label{ic_sectionIntro}

In applied mathematics, a wide variety of convex optimization problems such as single- or multifacility location problems, support vector machine problems for classification and regression, problems in clustering and portfolio optimization as well as signal and image processing problems, all of them potentially possessing nonsmooth terms in their objectives, can be reduced to the solving of inclusion problems involving mixtures of monotone set-valued operators.

Therefore, the solving of monotone inclusion problems involving maximally monotone operators (see \cite{AttBriCom10,  BecCom13, BotCseHei12, BotCseHei12b, BotHend12d, BotHend12e, BriCom11, ChaPoc11, Com01, Com04, Com09,Com12,ComPes12, Con12,DouRac56, EckBer92,Roc76,SetSteTeu11,StaPesSte13,Tse00,Vu13}) continues to be one of the most attractive branches of research. To the most popular methods for solving monotone inclusion problems belong the proximal point algorithm (see \cite{Roc76}) and the Douglas-Rachford splitting algorithm (see \cite{DouRac56}).

In the last years, motivated by different applications, the complexity of the monotone inclusion problems increased by allowing in their formulation maximally monotone operators composed with linear bounded operators (see \cite{BriCom11,ChaPoc11}), (single-valued) Lipschitzian or cocoercive monotone operators and parallel sums of maximally monotone operators (see \cite{BecCom13, BotCseHei12, BotHend12e,Com12,ComPes12,Con12,Vu13}). Also, under strong monotonicity assumptions, for some of these iterative schemes accelerated versions have been provided (see \cite{ChaPoc11,BotCseHei12b,BotHend12d}).

Our problem formulation is inspired by a real-world application in imaging (cf. \cite{ChaLio97, SetSteTeu11}), where first- and second-order total variation functionals are linked via infimal convolutions in order to reduce staircasing effects in the reconstructed images. The problem under investigation follows.

\begin{problem}\label{ic_problem}
Let $\h$ be a real Hilbert space, $z \in \h$, let $A:\h \rightarrow 2^{\h}$ be a maximally monotone operator, and $C:\h \rightarrow \h$ be a monotone $\mu^{-1}$-cocoercive operator for $\mu\in\R_{++}$.  Furthermore, for every $i=1,\ldots,m$, let $\g_i$, $\X_i$, $\Y_i$ be real Hilbert spaces, $r_i\in \g_i$, $B_i : \X_i \rightarrow 2^{\X_i}$ and $D_i:\Y_i \rightarrow 2^{\Y_i}$ be maximally monotone operators and consider the nonzero linear bounded operators $L_i:\h \rightarrow \g_i$, $K_i:\g_i \rightarrow \X_i$ and $M_i:\g_i \rightarrow \Y_i$. The problem is to solve the primal inclusion
\begin{align}
	\label{ic_p_primal}
	\text{find }\bx \in \h \text{ such that } \!z \in A\bx + \!\sum_{i=1}^mL_i^* \Big(\big(K_i^*\circ B_i\circ K_i \big)\Box \big(M_i^*\circ D_i\circ M_i\big)\Big)(L_i \bx-r_i) + C\bx
\end{align}
together with its dual inclusion
\begin{align}
	\label{ic_p_dual}
\text{find } \!\!\left\{\!\!\!\begin{array}{l}
\bp_i \in \X_i, i=1,...,m,\\
\bq_i \in \Y_i, i=1,...,m,\\
\by_i \in \g_i, i=1,...,m,
\end{array}\right. \!\!\!\text{such that } \!\exists x\in\h: \!\left\{
	\begin{array}{l}
		z - \sum_{i=1}^mL_i^*K_i^*\bp_i \in Ax +Cx, \\
        K_i(L_ix -\by_i-r_i) \in B_i^{-1}\bp_i, i=1,...,m,\\
		M_i\by_i \in D_i^{-1}\bq_i, i=1,...,m,\\
        K_i^*\bp_i = M_i^*\bq_i, i=1,...,m.
	\end{array}
\right.
\end{align}
\end{problem}

We provide in this paper two iterative methods of forward-backward and forward-backward-forward type, respectively, for solving this primal-dual pair of monotone inclusion problems and investigate their asymptotic behavior. 
A very similar problem formulation was recently investigated in \cite{BecCom13}, however, the proposed iterative scheme there relies on the forward-backward-forward method and is 
different from the corresponding one which we propose here. However, since it makes a forward step less, the forward-backward method is more attractive from the perspective of its numerical implementation. This phenomenon is 
supported by our experimental results reported in Section \ref{ic_secExperiment}.

The article is organized as follows. In Section \ref{ic_secNotation} we introduce notations and preliminary results in convex analysis and monotone operator theory. 
In Section \ref{ic_secMain} we formulate the two algorithms and study their convergence behavior. In Section \ref{ic_secConvexMinimization} we employ the outcomes of the previous one to the simultaneously solving of 
convex minimization problems and their conjugate dual problems. Numerical experiments in the context of image denoising problems with first- and second-order total variation functionals are made in Section \ref{ic_secExperiment}.

\section{Notation and preliminaries}\label{ic_secNotation}
We are considering the real Hilbert space $\h$ endowed with an \textit{inner product} $\left\langle \cdot ,\cdot \right\rangle$ and associated \textit{norm} $\left\| \cdot \right\| = \sqrt{\left\langle \cdot, \cdot \right\rangle}$. The symbols $\rightharpoonup$ and $\rightarrow$ denote weak and strong convergence, respectively. Having the sequences $(x_n)_{n\geq 0}$ and $(y_n)_{n\geq 0}$ in $\h$, we mind errors in the implementation of the algorithm by using the following notation taken from \cite{BecCom13}
\begin{align} \label{ic_errors}
	\left( x_n \approx y_n \ \forall n \geq 0 \right) \Leftrightarrow \sum_{n \geq 0} \|x_n-y_n\| < +\infty.
\end{align}
By $\R_{++}$ we denote the set of strictly positive real numbers and by $\R_+ := \R_{++} \cup \{0\}$. For a function $f: \h \rightarrow \oR := \R \cup \{\pm \infty\}$ we denote by $\dom f := \left\{ x \in \h : f(x) < +\infty \right\}$ its \textit{effective domain} and call $f$ \textit{proper}, if $\dom f \neq \varnothing$ and $f(x)>-\infty$ for all $x \in \h$. Let be
$$\Gamma(\h) := \{f: \h \rightarrow \overline \R: f \ \mbox{is proper, convex and lower semicontinuous}\}.$$
The \textit{conjugate function} of $f$ is $f^*:\h \rightarrow \oR$, $f^*(p)=\sup{\left\{ \left\langle p,x \right\rangle -f(x) : x\in\h \right\}}$ for all $p \in \h$ and, if $f \in \Gamma(\h)$, then $f^* \in \Gamma(\h)$, as well. The \textit{(convex) subdifferential} of $f: \h \rightarrow \oR$ at $x \in \h$ is the set $\partial f(x) = \{p \in \h : f(y) - f(x) \geq \left\langle p,y-x \right\rangle \ \forall y \in \h\}$, if $f(x) \in \R$, and is taken to be the empty set, otherwise. For a linear bounded operator $L: \h \rightarrow \g$, where $\g$ is another real Hilbert space, the operator $L^*: \g \rightarrow \h$, defined via $\< Lx,y  \> = \< x,L^*y  \>$ for all $x \in \h$ and all $y \in \g$, denotes its \textit{adjoint}.

Having two proper functions $f,\,g : \h \rightarrow \oR$, their \textit{infimal convolution} is defined by $f \Box g : \h \rightarrow \oR$, $(f \Box g) (x) = \inf_{y \in \h}\left\{ f(y) + g(x-y) \right\}$ for all $x \in \h$, being a convex function when $f$ and $g$ are convex.

Let $M:\h \rightarrow 2^{\h}$ be a set-valued operator. We denote by $\zer M = \{ x \in \h : 0 \in M x \}$ its set of \textit{zeros}, by $\gra M = \{ (x,u) \in \h \times \h : u \in Mx\}$ its \textit{graph} and by $\ran M =\{u \in \h : \exists x \in \h,\ u\in Mx\}$ its \textit{range}. The \textit{inverse} of $M$ is $M^{-1}:\h \rightarrow 2^{\h}$, $u \mapsto \{ x\in\h : u \in Mx \}$. We say that the operator $M$ is \textit{monotone} if $\< x-y,u-v \> \geq 0$ for all $(x,u),\,(y,v) \in \gra M$ and it is said to be \textit{maximally monotone} if there exists no monotone operator $M':\h \rightarrow 2^{\h}$ such that $\gra M'$ properly contains $\gra M$. The operator $M$ is said to be \textit{uniformly monotone} with modulus $\phi_M : \R_{+} \rightarrow [0,+\infty]$ if $\phi_M$ is increasing, vanishes only at $0$, and $\< x-y,u-v \> \geq \phi_M \left( \| x-y \|\right)$ for all $(x,u),\,(y,v) \in \gra M$.

Let $\mu>0$ be arbitrary. A single-valued operator $M: \h \rightarrow \h$ is said to be \textit{$\mu$-cocoercive} if $\langle x-y,Mx-My\rangle\geq \mu\|Mx-My\|^2$ for all $(x,y)\in \h \times \h$. Moreover, $M$ is \textit{$\mu$-Lipschitzian} if $\|Mx-My\|\leq \mu\|x-y\|$ for all $(x,y)\in \h \times \h$. A linear bounded operator $M:\h \rightarrow \h$ is said to be \textit{self-adjoint}, if $M=M^*$ and \textit{skew}, if $M^* = -M$. %$\<x,Mx\> =0$ for every $x \in \h$.

The \textit{sum} and the \textit{parallel sum} of two set-valued operators $M_1,\,M_2:\h \rightarrow 2^{\h}$ are defined as
$M_1 + M_2:\h \rightarrow 2^{\h}, (M_1 + M_2)(x) = M_1(x) + M_2(x) \ \forall x \in \h$
and
$$M_1 \Box M_2:\h \rightarrow 2^{\h}, M_1 \Box M_2  = \left(M_1^{-1} + M_2^{-1}\right)^{-1},$$
respectively. If $M_1$ and $M_2$ are monotone, than $M_1 + M_2$ and $M_1 \Box M_2$ are monotone, too. However, if $M_1$ and $M_2$ are maximally monotone, this property is in general not true neither for $M_1 + M_2$ nor for $M_1 \Box M_2$, unless some qualification conditions are fulfilled (see \cite{BauCom11, Bot10, Zal02}).

The \textit{resolvent} of an operator  $M:\h \rightarrow 2^{\h}$ is
$$ J_M = \left( \id + M \right)^{-1},$$
the operator $\id$ denoting the \textit{identity} on the underlying Hilbert space. When $M$ is maximally monotone, its resolvent is a single-valued firmly nonexpansive operator and, by \cite[Proposition 23.18]{BauCom11}, we have for $\gamma \in \R_{++}$
\begin{align}
	\label{ic_res-identity}
	\id = J_{\gamma M} + \gamma J_{\gamma^{-1}M^{-1}}\circ \gamma^{-1} \id.
\end{align}
Moreover, for $f \in \Gamma(\h)$ and $\gamma \in \R_{++}$ the subdifferential $\partial (\gamma f)$ is maximally monotone (cf. \cite{Roc70}) and it holds $J_{\gamma \partial f} = \left(\id + \gamma \partial f \right)^{-1} = \Prox_{\gamma f}$. Here, $\Prox_{\gamma f}(x)$ denotes the \textit{proximal point} of $\gamma f$ at $x\in\h$, representing the unique optimal solution of the optimization problem
\begin{align}\label{ic_prox-def}
\inf_{y\in \h}\left \{\gamma f(y)+\frac{1}{2}\|y-x\|^2\right\}.
\end{align}
In this particular situation, relation \eqref{ic_res-identity} becomes \textit{Moreau's decomposition formula}
\begin{align}
	\label{ic_res-identity_functions}
	\id = \Prox\nolimits_{\gamma f} + \gamma \Prox\nolimits_{\gamma^{-1}f^*} \circ \gamma^{-1}\id.
\end{align}
When $\Omega \subseteq \h$ is a nonempty, convex and closed set, the function $\delta_\Omega : \h \rightarrow \overline \R$, defined by $\delta_\Omega(x) = 0$ for $x \in \Omega$ and $\delta_\Omega(x) = +\infty$, otherwise, denotes the \textit{indicator function} of the set $\Omega$. For each $\gamma > 0$ the proximal point of $\gamma \delta_\Omega$ at $x \in \h$ is nothing else than
$$\Prox\nolimits_{\gamma \delta_\Omega}(x) = \Prox\nolimits_{\delta_\Omega}(x) = \proj_\Omega(x) = \argmin_{y \in \Omega} \frac{1}{2}\|y-x\|^2,$$
which is the \textit{projection} of $x$ on $\Omega$.

Finally, when for $i=1,\ldots,m$ the real Hilbert spaces $\h_i$ are endowed with inner product $\left\langle \cdot ,\cdot \right\rangle_{\h_i}$ and associated norm $\left\| \cdot \right\|_{\h_i} = \sqrt{\left\langle \cdot, \cdot \right\rangle_{\h_i}}$, we denote by
$$\f \h=\h_1\oplus\ldots\oplus\h_m$$
their direct sum. For $\fv=(v_1,\ldots,v_m)$, $\fq=(q_1,\ldots,q_m) \in \f \h$, this real Hilbert space is endowed with inner product and associated norm defined via
\begin{align*}
	\< \fv,\fq \>_{\f \h} = \sum_{i=1}^m \< v_i,q_i \>_{\h_i}  \text{ and, respectively, } \|\fv\|_{\f \h} = \sqrt{\sum_{i=1}^m \| v_i \|_{\h_i}^2}.
\end{align*}

\section{The primal-dual iterative schemes}\label{ic_secMain}
Within this section we provide two different algorithms for solving the primal-dual inclusion introduced in Problem \ref{ic_problem} and discuss their asymptotic convergence. 
In Subsection \ref{ic_subsecFBF}, however, the assumptions imposed on the monotone operator $C:\h \rightarrow \h$ are weakened by assuming that $C$ is only $\mu$-Lipschitz continuous for some $\mu \in \R_{++}$.

In the following let be
\begin{align*}
\f \X = \X_1 \oplus \ldots \oplus \X_m, \ \f \Y=\Y_1\oplus \ldots \oplus \Y_m,\ \f \g=\g_1\oplus\ldots\oplus\g_m
\end{align*}
and
\begin{align*}
\f p=(p_1,\ldots,p_m) \in \f \X,\ \f q=(q_1,\ldots,q_m) \in \f \Y,\ \f y=(y_1,\ldots,y_m) \in \f \g.
\end{align*}
We say that $(\bx, \f \bp, \f \bq, \f \by) \in \h \oplus \f \X \oplus \f \Y \oplus \f \g$ is a primal-dual solution to Problem \ref{ic_problem}, if
\begin{align}
\label{ic_condition}
	\begin{aligned}
& z - \sum_{i=1}^mL_i^*K_i^*\bp_i \in A\bx +C\bx \ \mbox{and} \\
& K_i(L_i\bx -\by_i-r_i) \in B_i^{-1}\bp_i,\ M_i\by_i \in D_i^{-1}\bq_i,\  K_i^*\bp_i = M_i^*\bq_i,\ i=1,\ldots,m.
	\end{aligned}
\end{align}
If  $(\bx, \f \bp, \f \bq, \f \by) \in \h \oplus \f \X \oplus \f \Y \oplus \f \g$ is a primal-dual solution to Problem \ref{ic_problem}, then $\bx$ is a solution to \eqref{ic_p_primal} and $(\f \bp, \f \bq, \f \by)$ is a solution to \eqref{ic_p_dual}. Notice also that
{\allowdisplaybreaks
\begin{align}\label{ic_equivsol}
&\bx \text{ solves }\eqref{ic_p_primal}  \Leftrightarrow z   \in A\bx+ \sum_{i=1}^mL_i^* \Big(\big(K_i^*\circ B_i\circ K_i \big)\Box \big(M_i^*\circ D_i\circ M_i\big)\Big)(L_i \bx-r_i)+ C\bx \nonumber \\
\Leftrightarrow \ &\exists\, \f \bv \in \f \g \text{ such that} \left\{
		\begin{array}{l} z - \sum_{i=1}^mL_i^*\bv_i  \in A\bx + C\bx, \\
L_i \bx-r_i \in  \big(K_i^*\circ B_i\circ K_i \big)^{-1}(\bv_i) + \big(M_i^*\circ D_i\circ M_i\big)^{-1}(\bv_i),\\
 i=1,\ldots,m.  \end{array}\right. \nonumber \\
\Leftrightarrow \ &\exists\, (\f \bv, \f \by) \in \f \g \oplus \f \g \text{ such that} \left\{
		\begin{array}{l} z - \sum_{i=1}^mL_i^*\bv_i  \in A\bx + C\bx, \\
\bv_i \in \big(K_i^*\circ B_i\circ K_i \big)(L_i \bx - \by_i - r_i),\ i=1,\ldots,m,\\
\bv_i \in  \big(M_i^*\circ D_i\circ M_i\big)(\by_i),\ i=1,\ldots,m
\end{array}\right. \nonumber \\
\Leftrightarrow \ &\exists\, (\f \bp, \f \bq, \f \by) \in \f \X \oplus \f \Y \oplus \f \g \text{ such that} \left\{
		\begin{array}{l} z - \sum_{i=1}^mL_i^*K_i^*\bp_i  \in A\bx + C\bx, \\
\bp_i \in \big(B_i\circ K_i \big)(L_i \bx - \by_i - r_i),\ i=1,\ldots,m,\\
\bq_i \in  \big(D_i\circ M_i\big)(\by_i),\ i=1,\ldots,m,\\
K_i^*\bp_i = M_i^*\bq_i,\ i=1,\ldots,m.
\end{array}\right. \nonumber \\
\Leftrightarrow \ &\exists\, (\f \bp, \f \bq, \f \by) \in \f \X \oplus \f \Y \oplus \f \g \text{ such that} \left\{
		\begin{array}{l}
		z - \sum_{i=1}^mL_i^*K_i^*\bp_i \in A\bx +C\bx, \\
        K_i(L_i\bx -\by_i-r_i) \in B_i^{-1}\bp_i,\, i=1,\ldots,m,\\
		M_i\by_i \in D_i^{-1}\bq_i,\ i=1,\ldots,m,\\
        K_i^*\bp_i = M_i^*\bq_i,\ i=1,\ldots,m.
	\end{array}\right.
\end{align}}
Thus, if $\bx$ is a solution to \eqref{ic_p_primal}, then there exists $(\f \bp, \f \bq, \f \by) \in \f \X \oplus \f \Y \oplus \f \g$ such that $(\bx, \f \bp, \f \bq, \f \by)$ is a primal-dual solution to Problem \ref{ic_problem} and if $(\f \bp, \f \bq, \f \by)$ is a solution to \eqref{ic_p_dual}, then there exists $\bx \in \h$ such that $(\bx, \f \bp, \f \bq, \f \by)$ is a primal-dual solution to Problem \ref{ic_problem}.

\begin{remark}\label{ic_remark_errors}
The notations  \eqref{ic_errors} have been introduced in order to allow errors in the implementation of the algorithm, without affecting the readability of the paper in the sequel. This is reasonable since errors preserve their summability under addition, scalar multiplication and linear bounded mappings.
\end{remark}

\subsection{An algorithm of forward-backward type}\label{ic_subsecFB}
In this subsection we propose a forward-backward type algorithm for solving Problem \ref{ic_problem} and prove its convergence by showing that it can be reduced to an error-tolerant forward-backward iterative scheme.

\begin{algorithm}\label{ic_alg1} \text{ }\newline
Let $x_0 \in \h$, and for any $i=1,\ldots,m$, let $p_{i,0}\in\X_i$, $q_{i,0} \in \Y_i$ and $z_{i,0},\ y_{i,0},\ v_{i,0} \in \g_i$. For any $i=1,\ldots,m$, let $\tau,\,\theta_{1,i},\, \theta_{2,i},\, \gamma_{1,i},\, \gamma_{2,i}$ and $\sigma_i$ be strictly positive real numbers such that
\begin{align}
	\label{ic_condition_step_length}
	2\mu^{-1}\left(1-\overline{\alpha} \right) \min_{i=1,\ldots,m}\bigg\{\frac{1}{\tau},\frac{1}{\theta_{1,i}},\frac{1}{\theta_{2,i}}, \frac{1}{\gamma_{1,i}},\frac{1}{\gamma_{2,i}},\frac{1}{\sigma_i} \bigg\}>1,
\end{align}
for
\begin{align*}
\overline{\alpha} = \max\left\{\sqrt{\tau \sum_{i=1}^m \sigma_i \|L_i\|^2}, \max_{j=1,\ldots,m}\left\{\sqrt{\theta_{1,j}\gamma_{1,j}\|K_j\|^2}, \sqrt{\theta_{2,j}\gamma_{2,j}\|M_j\|^2}\right\} \right\}.
\end{align*}
Furthermore, let $\varepsilon \in (0,1)$, $(\lambda_n)_{n\geq 0}$ a sequence in $\left[\varepsilon,1\right]$ and set
{\allowdisplaybreaks	
\begin{align}\label{ic_A1}
	  \left(\forall n\geq 0\right) \begin{array}{l} \left\lfloor \begin{array}{l}
		\wt x_{n} \approx J_{\tau A}\left( x_n - \tau\left(Cx_n + \sum_{i=1}^m L_i^* v_{i,n} - z\right) \right) \\
		\text{For }i=1,\ldots,m  \\
				\left\lfloor \begin{array}{l}
					\wt p_{i,n} \approx J_{\theta_{1,i} B_i^{-1}}\left(p_{i,n} +\theta_{1,i} K_i z_{i,n}  \right) \\
					\wt q_{i,n} \approx J_{\theta_{2,i} D_i^{-1}}\left(q_{i,n} +\theta_{2,i} M_i y_{i,n}  \right) \\
					u_{1,i,n} \approx z_{i,n} +\gamma_{1,i}\left(K_i^*\left(p_{i,n}-2\wt p_{i,n}\right)+v_{i,n}+\sigma_i\left(L_i(2\wt x_{n}-x_n)-r_i\right)\right) \\
					u_{2,i,n} \approx y_{i,n} +\gamma_{2,i}\left(M_i^*\left(q_{i,n}-2\wt q_{i,n}\right)+v_{i,n}+\sigma_i\left(L_i(2\wt x_{n}-x_n)-r_i\right)\right) \\
					\wt z_{i,n} \approx \frac{1+\sigma_i\gamma_{2,i}}{1+\sigma_i(\gamma_{1,i}+\gamma_{2,i})}\left(u_{1,i,n}-\frac{\sigma_i\gamma_{1,i}}{1+\sigma_i\gamma_{2,i}} u_{2,i,n}\right) \\
					\wt y_{i,n} \approx \frac{1}{1+\sigma_i\gamma_{2,i}}\left(u_{2,i,n} -\sigma_i\gamma_{2,i}\wt z_{i,n}\right) \\
					\wt v_{i,n} \approx v_{i,n} + \sigma_i\left(L_i(2\wt x_{n}-x_n)-r_i - \wt z_{i,n}- \wt y_{i,n}\right)
				\end{array} \right.\\
		x_{n+1} = x_n + \lambda_n (\wt x_{n}-x_n ) \\
		\text{For }i=1,\ldots,m  \\
				\left\lfloor \begin{array}{l}
					p_{i,n+1} = p_{i,n} + \lambda_n(\wt p_{i,n}-p_{i,n})\\
					q_{i,n+1} = q_{i,n} + \lambda_n(\wt q_{i,n}-q_{i,n})\\
					z_{i,n+1} = z_{i,n} + \lambda_n(\wt z_{i,n}-z_{i,n})\\
					y_{i,n+1} = y_{i,n} + \lambda_n(\wt y_{i,n}-y_{i,n})\\
					v_{i,n+1} = v_{i,n} + \lambda_n(\wt v_{i,n}-v_{i,n}).
				\end{array} \right. \\ \vspace{-4mm}
		\end{array}
		\right.
		\end{array}
	\end{align}}
\end{algorithm}

\begin{theorem}\label{ic_th1}
For Problem \ref{ic_problem}, suppose that
\begin{align}\label{ic_th1_inclusion}
	z \in \ran \bigg( A +  \sum_{i=1}^m L_i^*\Big(\big(K_i^*\circ B_i\circ K_i \big)\Box \big(M_i^*\circ D_i\circ M_i\big)\Big)(L_i \cdot -r_i) +C \bigg),
\end{align}
and consider the sequences generated by Algorithm \ref{ic_alg1}. Then there exists a primal-dual solution $(\bx, \f \bp, \f \bq, \f \by)$ to Problem \ref{ic_problem} such that
\begin{enumerate}[label={(\roman*)}]
	\setlength{\itemsep}{-2pt}
		\item  \label{ic_th1.01} $x_n \rightharpoonup \bx$, $p_{i,n} \rightharpoonup \bp_{i}$, $q_{i,n} \rightharpoonup \bq_{i}$ and $y_{i,n}\rightharpoonup \by_{i}$ for any $i=1,\ldots,m$ as $n \rightarrow +\infty$.
    \item  \label{ic_th1.02} if $C$ is uniformly monotone at $\bx$, then $x_n \rightarrow \bx$ as $n \rightarrow +\infty$.
	\end{enumerate}
\end{theorem}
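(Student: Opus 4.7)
The plan is to recast Problem \ref{ic_problem} as a single monotone inclusion in a product Hilbert space and to identify Algorithm \ref{ic_alg1} with an error-tolerant forward-backward splitting scheme in a suitably renormed version of this space. Set
$$\fK := \h\oplus \fX\oplus \fY\oplus \fg\oplus \fg\oplus \fg$$
with generic element $\fw = (x,\fp,\fq,\fz,\fy,\fv)$, and, guided by the equivalence chain \eqref{ic_equivsol}, introduce a set-valued operator $\boldsymbol{M}:\fK\to 2^{\fK}$ and a single-valued operator $\boldsymbol{S}:\fK\to \fK$ such that an element $(\bx,\f\bp,\f\bq,\f\bz,\f\by,\f\bv)$ belonging to $\zer(\boldsymbol{M}+\boldsymbol{S})$ encodes exactly the system \eqref{ic_condition} augmented by the relations $\f\bv_i = K_i^*\f\bp_i = M_i^*\f\bq_i$ and $\f\bz_i+\f\by_i = L_i\bx-r_i$. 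The operator $\boldsymbol{M}$ will collect the resolvable parts (namely $A-z$, $B_i^{-1}$, $D_i^{-1}$) together with a skew coupling built from $L_i,L_i^*,K_i,K_i^*,M_i,M_i^*$, while $\boldsymbol{S}$ acts only in the $x$-coordinate through $C$, so that $\boldsymbol{S}$ inherits $\mu^{-1}$-cocoercivity from $C$ on the product space.

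The central step is to choose a self-adjoint strongly positive linear operator $V$ on $\fK$, constructed block-wise from the step-sizes $\tau,\theta_{1,i},\theta_{2,i},\gamma_{1,i},\gamma_{2,i},\sigma_i$, such that the update rule \eqref{ic_A1} reads as the single recursion
$$\wt\fw_n \approx (V+\boldsymbol{M})^{-1}(V\fw_n-\boldsymbol{S}\fw_n),\qquad \fw_{n+1} = \fw_n+\lambda_n(\wt\fw_n-\fw_n),$$
with summable perturbations in the sense of \eqref{ic_errors}. Equivalently, in the $V$-metric $\<\cdot,\cdot\>_V := \<V\cdot,\cdot\>$, this is an error-tolerant forward-backward iteration applied to $V^{-1}\boldsymbol{M}+V^{-1}\boldsymbol{S}$ on $(\fK,\<\cdot,\cdot\>_V)$. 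The off-diagonal entries of $V$ are designed to cancel the skew couplings in $\boldsymbol{M}$, so that only the resolvents of $\tau A$, $\theta_{1,i}B_i^{-1}$, $\theta_{2,i}D_i^{-1}$, together with a small explicit linear solve inside the $(\fz_i,\fy_i,\fv_i)$-block, have to be performed; that solve produces precisely the coefficient $\tfrac{1+\sigma_i\gamma_{2,i}}{1+\sigma_i(\gamma_{1,i}+\gamma_{2,i})}$ appearing in the $\wt z_{i,n}$-line of \eqref{ic_A1}. Condition \eqref{ic_condition_step_length} is calibrated so that $V$ is strongly positive (positive definiteness being equivalent to $\overline\alpha<1$) and, moreover, so that $V^{-1}\boldsymbol{S}$ is cocoercive in the $V$-metric with a constant strictly greater than $1/2$. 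This is the crucial inequality and it hinges on bounding the spectral norm of all coupling blocks by $\overline\alpha$; the factor $2\mu^{-1}(1-\overline\alpha)$ in \eqref{ic_condition_step_length} arises exactly here.

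Once this reformulation is in place, weak convergence of $\fw_n$ to some zero of $\boldsymbol{M}+\boldsymbol{S}$ follows from the standard error-tolerant forward-backward convergence theorem (compare \cite{Com04,Vu13}), existence of such a zero being guaranteed by the range condition \eqref{ic_th1_inclusion} combined with \eqref{ic_equivsol}. Projecting onto the individual factors of $\fK$ yields \ref{ic_th1.01}. For \ref{ic_th1.02}, the quasi-Fej\'er estimate underlying the forward-backward analysis additionally delivers $\sum_{n\geq 0}\<Cx_n-C\bx,\,x_n-\bx\><+\infty$; uniform monotonicity of $C$ at $\bx$ with modulus $\phi_C$ then forces $\phi_C(\|x_n-\bx\|)\to 0$ and hence $x_n\to\bx$. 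The main obstacle I anticipate is the bookkeeping behind $V$ and the verification that \eqref{ic_condition_step_length} simultaneously delivers its strong positivity and the strict $1/2$-cocoercivity of $V^{-1}\boldsymbol{S}$ in the $V$-metric; everything else is a routine, if notationally heavy, application of the forward-backward machinery.
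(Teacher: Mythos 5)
Your proposal follows essentially the same route as the paper: the same product space $\fK$, the same splitting into a set-valued part (maximally monotone plus skew couplings), a cocoercive part carrying only $C$, a self-adjoint strongly positive operator $\f V$ whose off-diagonal blocks absorb the couplings so that \eqref{ic_A1} becomes an error-tolerant forward-backward iteration in the $\f V$-metric, with \eqref{ic_condition_step_length} guaranteeing both the strong positivity of $\f V$ and the cocoercivity constant exceeding $1/2$, and convergence via \cite{Com04}. The only caveat is in \ref{ic_th1.02}: the forward-backward analysis delivers $\sum_{n\geq 0}\|Cx_n-C\bx\|^2<+\infty$ rather than summability of the inner products $\langle Cx_n-C\bx,x_n-\bx\rangle$ as you claim, but this is harmless since $\|Cx_n-C\bx\|\to 0$ together with boundedness of $(x_n-\bx)_{n\geq 0}$ already forces $\phi_C(\|x_n-\bx\|)\to 0$ and hence $x_n\to\bx$, which is exactly the paper's argument.
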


\begin{proof}
We introduce the real Hilbert space $\fK=\h  \oplus \f \X \oplus \f \Y \oplus \f \g \oplus \f \g \oplus \f \g$  and let
\begin{align}
	\label{ic_hilbert2}
	\left\{\begin{array}{l}
		\f p = (p_{1},\ldots,p_{m}) \\
		\f q = (q_{1},\ldots,q_{m}) \\	
		\f y = (y_{1},\ldots,y_{m})
	\end{array}\right.
\text{and} \
\left\{\begin{array}{l}
		\f z = (z_{1},\ldots,z_{m}) \\
		\f v = (v_1,\ldots,v_m) \\
		\f r = (r_1,\ldots,r_m)
	\end{array}\right..
\end{align}
We introduce the maximally monotone operators
\begin{align*}
\f B : \f \X \rightarrow 2^{\f \X}, \ \f p \mapsto B_1p_{1} \times \ldots \times B_mp_{m} \ \mbox{and} \ \f D : \f \Y \rightarrow 2^{\f \Y},\ \f q \mapsto D_1q_1\times \ldots \times D_m q_m.
\end{align*}
Further, consider the set-valued operator
\begin{align*}
	\f M : \fK \rightarrow 2^{\fK}, \ (x,\f p,\f q, \f z,\f y, \f v) \! \mapsto \! (-z + Ax) \times \f B^{-1} \f p \times \f D^{-1} \f q \times (-\f v, -\f v, \f r + \f z+ \f y ),
\end{align*}
which is maximally monotone,  since $A$, $\f B$ and $\f D$ are maximally monotone (cf. \cite[Proposition 20.22 and Proposition 20.23]{BauCom11}) and the linear bounded operator
\begin{align*}
(x,\f p,\f q, \f y, \f z,\f v) \mapsto (0,\f 0, \f 0, -\f v, -\f v,\f z + \f y )
\end{align*}
is skew and hence maximally monotone (cf. \cite[Example 20.30]{BauCom11}). Therefore, $\f M$ can be written as the sum of two maximally monotone operators, one of them having full domain, fact which leads to the maximality of $\f M$ (see, for instance, \cite[Corollary 24.4(i)]{BauCom11}). Furthermore, consider the linear bounded operators
\begin{align*}
\wt K : \f \g \rightarrow \f \X,\ \f z \mapsto (K_1z_{1},\ldots,K_mz_{m}),\ \wt M : \f \g \rightarrow \f \Y,\ \f y \mapsto (M_1 y_{1}, \ldots,M_m y_{m}).
\end{align*}
and
\begin{align*}
	&\f S : \fK \rightarrow \fK, \\
	&(x,\f p,\f q, \f z, \f y,\f v) \mapsto \left(\sum_{i=1}^mL_i^*v_i,-\wt K \f z,-\wt M \f y,\wt K^* \f p,\wt M^* \f q,-L_1x, \ldots, -L_mx\right)
\end{align*}
The operator $\f S$ is skew, as well, hence maximally monotone. As $\dom \f S = \fK$, the sum $\f M + \f S$ is maximally monotone (see \cite[Corollary 24.4(i)]{BauCom11}).

Finally, we introduce the monotone operator
\begin{align*}
	\f Q:\fK \rightarrow \fK,\ (x,\f p,\f q, \f z, \f y ,\f v) \mapsto (Cx,\f 0, \f 0, \f 0, \f 0, \f 0)
\end{align*}
which is, obviously, $\mu^{-1}$-cocoercive. By making use of \eqref{ic_equivsol}, we observe that
{\allowdisplaybreaks
\begin{align*}
\eqref{ic_th1_inclusion} & \
	 \Leftrightarrow \exists\, (x, \f p, \f q, \f y) \in \h \oplus \f \X \oplus \f \Y \oplus \f \g: \ \left\{
		\begin{array}{l}
		z - \sum_{i=1}^mL_i^*K_i^* p_i \in Ax +Cx, \\
        K_i(L_ix - y_i-r_i) \in B_i^{-1} p_i,\ i=1,\ldots,m,\\
		M_i y_i \in D_i^{-1} q_i,\ i=1,\ldots,m,\\
        K_i^* p_i = M_i^* q_i, \ i=1,\ldots,m.
	\end{array}\right.\\
	& \Leftrightarrow \begin{array}{l}\exists\,(x,\f p, \f q) \in \h \oplus  \f \X \oplus \f \Y \\
\exists\,(\f z, \f y, \f v) \in \f \g \oplus \f \g \oplus \f \g \end{array} :
\left\{\begin{array}{l}0\in -z +Ax+\sum_{i=1}^mL_i^*v_i +Cx,  \\
0 \in -K_i z_{i} +B_i^{-1}p_{i},\ i=1,\ldots,m, \\
0 \in -M_iy_{i} + D_i^{-1}q_{i},\ i=1,\ldots,m, \\
0 = K_i^*p_{i} -v_i,\ i=1,\ldots,m, \\
0 = M_i^*q_{i}-v_i,\ i=1,\ldots,m,\\
0 =  r_i + z_{i}+y_{i} -L_ix,\ i=1,\ldots,m
\end{array}\right. \\
	& \Leftrightarrow \exists\,(x,\f p,\f q,\f z,\f y,\f v) \in \zer(\f M + \f S + \f Q).
\end{align*}}
From here it follows that
\begin{align}
	\label{ic_zer_opt_solution}
	&(\bx,\f \bp,\f \bq,\f \bz,\f \by,\f \bv) \in \zer(\f M + \f S+\f Q) \notag \\
	&\Rightarrow \left\{
		\begin{array}{l}
		z - \sum_{i=1}^mL_i^*K_i^*\bp_i \in A\bx +C\bx, \\
        K_i(L_i\bx -\by_i-r_i) \in B_i^{-1}\bp_i,\ i=1,\ldots,m,\\
		M_i\by_i \in D_i^{-1}\bq_i,\ i=1,\ldots,m,\\
        K_i^*\bp_i = M_i^*\bq_i,\ i=1,\ldots,m.
	\end{array}\right. \notag \\
	& \Leftrightarrow (\bx, \f \bp, \f \bq, \f \by) \text{ is a primal-dual solution to Problem \ref{ic_problem}}.
\end{align}
Further, for positive real values $\tau, \theta_{1,i},\theta_{2,i},\gamma_{1,i},\gamma_{2,i},\sigma_i \in \R_{++}$, $i=1,\ldots,m$, we introduce the notations
\begin{align*}
\left\{ \begin{array}{l}
	\frac{\f p}{\theta_1}=\left(\frac{p_{1}}{\theta_{1,1}},\ldots,\frac{p_{m}}{\theta_{1,m}}\right) \\
	\frac{\f q}{\theta_2}=\left(\frac{q_{1}}{\theta_{2,1}},\ldots,\frac{q_{m}}{\theta_{2,m}}\right)
 \end{array}\right.
\left\{ \begin{array}{l}
	\frac{\f z}{\gamma_1}=\left(\frac{z_{1}}{\gamma_{1,1}},\ldots,\frac{z_{m}}{\gamma_{1,m}}\right) \\
	\frac{\f y}{\gamma_2}=\left(\frac{y_{1}}{\gamma_{2,1}},\ldots,\frac{y_{m}}{\gamma_{2,m}}\right)
 \end{array}\right.
\left\{ \begin{array}{l}
	\frac{\f v}{\sigma}=\left(\frac{v_1}{\sigma_1},\ldots,\frac{v_m}{\sigma_m}\right)
 \end{array}\right.
\end{align*}
and define the linear bounded operator
\begin{align*}
	\f V : \fK \rightarrow \fK,\ &(x,\f p, \f q,\f z, \f y,\f v) \mapsto
	\left(\frac{x}{\tau},\frac{\f p}{\theta_1},\frac{\f q}{\theta_2},\frac{\f z}{\gamma_1},\frac{\f y}{\gamma_2},\frac{\f v}{\sigma} \right) \\
	&+ \left(-\sum_{i=1}^mL_i^*v_i,\wt K \f z,\wt M \f y,\wt K^* \f p,\wt M^* \f q,-L_1x,\ldots,-L_mx\right).
\end{align*}
It is a simple calculation to prove that $\f V$ is self-adjoint. Furthermore, the operator $\f V$ is $\rho$-strongly positive with
$$\rho = \left(1-\overline{\alpha} \right) \min_{i =1,\ldots,m}\bigg\{\frac{1}{\tau},\frac{1}{\theta_{1,i}},\frac{1}{\theta_{2,i}}, \frac{1}{\gamma_{1,i}},\frac{1}{\gamma_{2,i}},\frac{1}{\sigma_i} \bigg\} > 0,$$
for
$$ \overline{\alpha} = \max\left\{\sqrt{\tau \sum_{i=1}^m \sigma_i \|L_i\|^2}, \max_{j =1,\ldots,m}\left\{\sqrt{\theta_{1,j}\gamma_{1,j}\|K_j\|^2}, \sqrt{\theta_{2,j}\gamma_{2,j}\|M_j\|^2}\right\} \right\}. $$
The fact that $\rho$ is a positive real number follows by the assumptions made in Algorithm \ref{ic_alg1}. Indeed, using that $2ab \leq \alpha a^2 + \frac{b^2}{\alpha}$ for every $a,\,b\in \R$ and every $\alpha \in \R_{++}$, it yields for any $i=1,\ldots,m$
\begin{align}
	\label{ic_ineq1}
	\begin{aligned}
	2\| L_i \| \|x\|_{\h} \|v_i\|_{\g_i} &\leq  \frac{\sigma_i \|L_i\|^2}{\sqrt{\tau \sum_{i=1}^m \sigma_i \|L_i\|^2}} \|x\|_{\h}^2 + \frac{\sqrt{\tau \sum_{i=1}^m \sigma_i \|L_i\|^2}}{\sigma_i} \|v_i\|_{\g_i}^2,\\
	2\| K_i \| \|p_i\|_{\X_i} \|z_i\|_{\g_i} &\leq \frac{\gamma_{1,i} \|K_i\|}{\sqrt{\theta_{1,i} \gamma_{1,i}}} \|p_i\|_{\X_i}^2 + \frac{\sqrt{\theta_{1,i} \gamma_{1,i} \|K_i\|^2}}{\gamma_{1,i}} \|z_i\|_{\g_i}^2, \\
	2\| M_i \| \|q_i\|_{\Y_i} \|y_i\|_{\g_i} &\leq \frac{\gamma_{2,i} \|M_i\|}{\sqrt{\theta_{2,i} \gamma_{2,i}}} \|q_i\|_{\Y_i}^2 + \frac{\sqrt{\theta_{2,i} \gamma_{2,i} \|M_i\|^2}}{\gamma_{2,i}} \|y_i\|_{\g_i}^2. \\
	\end{aligned}
\end{align}
Consequently, for each $\fx=(x,\f p, \f q, \f z, \f y, \f v) \in \fK$, using the Cauchy-Schwarz inequality and \eqref{ic_ineq1}, it follows that
\begin{align}
	\<\fx, \f V\fx \>_{\fK}
	&= \frac{\|x\|_{\h}^2}{\tau} + \sum_{i=1}^m\left[\frac{\|p_i\|_{\X_i}^2}{\theta_{1,i}} + \frac{\|q_i\|_{\Y_i}^2}{\theta_{2,i}}+\frac{\|z_i\|_{\g_i}^2}{\gamma_{1,i}}+\frac{\|y_i\|_{\g_i}^2}{\gamma_{2,i}}+ \frac{\|v_i\|_{\g_i}^2}{\sigma_i}\right] \notag \\
	&\quad - 2\sum_{i=1}^m \< L_i x, v_i\>_{\g_i} + 2\sum_{i=1}^m \<p_i,K_iz_i\>_{\X_i} + 2\sum_{i=1}^m \<q_i,M_iy_i\>_{\Y_i} \notag \\
	&\geq  \left(1-\overline{\alpha} \right) \min_{i =1,\ldots,m}\bigg\{\frac{1}{\tau},\frac{1}{\theta_{1,i}},\frac{1}{\theta_{2,i}}, \frac{1}{\gamma_{1,i}},\frac{1}{\gamma_{2,i}},\frac{1}{\sigma_i} \bigg\} \|\f x\|_{\fK}^2 \notag \\
	&= \rho  \|\fx\|_{\fK}^2.
\end{align}
Since $\f V$ is $\rho$-strongly positive, we have $\cl(\ran \f V)=\ran \f V$ (cf. \cite[Fact 2.19]{BauCom11}), $\zer \f V = \{0\}$ and, as $(\ran \f V)^{\bot} = \zer \f V^* = \zer \f V = \{0\}$ (see, for instance, \cite[Fact 2.18]{BauCom11}), it holds $\ran \f V = \fK$. Consequently, $\f V^{-1}$ exists and $\|\f V^{-1}\|\leq \frac{1}{\rho}$.

In consideration of \eqref{ic_errors},  the algorithmic scheme \eqref{ic_A1} can equivalently be written in the form
	\begin{align}\label{ic_A1.1}
	  \left(\forall n\geq 0\right) \begin{array}{l} \left\lfloor \begin{array}{l}
		\frac{x_n- \wt x_{n}}{\tau} - \sum_{i=1}^m L_i^*( v_{i,n}- \wt v_{i,n}) -Cx_n \\ \hspace{3cm} \in -z + A(\wt x_{n}-a_n)  + \sum_{i=1}^mL_i^*\wt v_{i,n} -\frac{a_n}{\tau} \\
		\text{For }i=1,\ldots,m  \\
				\left\lfloor \begin{array}{l}
				 \frac{p_{i,n}-\wt p_{i,n}}{\theta_{1,i}} + K_i(z_{i,n}-\wt z_{i,n}) \in B_i^{-1}(\wt p_{i,n}-b_{i,n}) -K_i \wt z_{i,n} -\frac{b_{i,n}}{\theta_{1,i}} \\
				 \frac{q_{i,n}-\wt q_{i,n}}{\theta_{2,i}} + M_i(y_{i,n}-\wt y_{i,n}) \in D_i^{-1}(\wt q_{i,n}-d_{i,n}) -M_i \wt y_{i,n} -\frac{d_{i,n}}{\theta_{2,i}} \\
				 \frac{z_{i,n}-\wt z_{i,n}}{\gamma_{1,i}} + K_i^*(p_{i,n}-\wt p_{i,n}) = -\wt v_{i,n} +K_i^*\wt p_{i,n}-e_{1,i,n} \\
				 \frac{y_{i,n}-\wt y_{i,n}}{\gamma_{2,i}} + M_i^*(q_{i,n}-\wt q_{i,n}) = -\wt v_{i,n} +M_i^*\wt q_{i,n}-e_{2,i,n} \\
				 \frac{v_{i,n} - \wt v_{i,n}}{\sigma_i}	- L_i (x_n - \wt x_n) = r_i  + \wt z_{i,n}+ \wt y_{i,n} - L_i \wt x_{n} -e_{3,i,n}\\
				\end{array} \right.\\
		\f x_{n+1} = \f x_n + \lambda_n (\f{\wt x_n} - \f x_n ),	
		\end{array}
		\right.
		\end{array}
	\end{align}
where
\begin{align*}
\left\{
	\begin{array}{l}
	\f p_{n} = (p_{1,n},\ldots p_{m,n}) \in \f \X \\
	\f q_{n} =(q_{1,n},\ldots,q_{m,n}) \in \f \Y \\
	\f z_{n} =(z_{1,n},\ldots,z_{m,n}) \in \f \g \\
	\f y_{n} =(y_{1,n},\ldots,y_{m,n}) \in \f \g \\
	\f v_{n} =(v_{1,n},\ldots,v_{m,n}) \in \f \g
	\end{array}\right.\quad
\left\{
	\begin{array}{l}
	\f{\wt p_{n}} = (\wt p_{1,n},\ldots \wt p_{m,n}) \in \f \X \\
	\f{\wt q_{n}} =(\wt q_{1,n},\ldots,\wt q_{m,n}) \in \f \Y \\
	\f{\wt z_{n}} =(\wt z_{1,n},\ldots,\wt z_{m,n}) \in \f \g \\
	\f{\wt y_{n}} =(\wt y_{1,n},\ldots,\wt y_{m,n}) \in \f \g \\
	\f{\wt v_{n}} =(\wt v_{1,n},\ldots,\wt v_{m,n}) \in \f \g
	\end{array}\right.
\end{align*}
\begin{align*}
\left\{
	\begin{array}{l}
	\f x_n = (x_n,\f p_{n},\f q_{n}, \f z_{n}, \f y_{n} , \f v_{n}) \in \fK \\
	\f{\wt x_n} = (\wt x_n, \f{\wt p_{n}}, \f{\wt q_{n}}, \f{\wt z_{n}}, \f{\wt y_{n}}, \f{\wt v_{n}}) \in \fK.
	\end{array}\right.
\end{align*}
Also, for any $n \geq 0$, we consider sequences defined  by
\begin{align}
\label{ic_error_sequences}
\left\{
	\begin{array}{l}
	a_n \in \h \\
	\f b_{n} = (b_{1,n},\ldots b_{m,n}) \in \f \X \\
	\f d_{n} =(d_{1,n},\ldots,d_{m,n}) \in \f \Y
	\end{array}\right.
\text{and} \
\left\{
	\begin{array}{l}
	\f e_{1,n} =(e_{1,1,n},\ldots,e_{1,m,n}) \in \f \g \\
	\f e_{2,n} =(e_{2,1,n},\ldots,e_{2,m,n}) \in \f \g, \\
	\f e_{3,n} =(e_{3,1,n},\ldots,e_{3,m,n}) \in \f \g
	\end{array}\right.
\end{align}
that are summable in the corresponding norm. Further, by denoting for any $n \geq 0$
\begin{align*}
\left\{
	\begin{array}{l}
	\f e_n = (a_n,\f b_{n},\f d_{n}, \f 0, \f 0 , \f 0) \in \fK \\
	\f e^{\tau}_n = \left(\frac{a_n}{\tau},\frac{\f b_{n}}{\theta_1},\frac{\f d_n}{\theta_2},\f e_{1,n},\f e_{2,n}, \f e_{3,n}\right) \in \fK,
	\end{array}\right.
\end{align*}
which are also terms of summable sequences in the corresponding norm, it yields that the scheme in \eqref{ic_A1.1} is equivalent to
\begin{align}
	\label{ic_A1.2}
	\left(\forall n\geq 0\right)  \left\lfloor \begin{array}{l}
	\f V(\f x_n - \f{\wt x_n}) -\f Q \f x_n \in \left(\f M +\f S\right) (\f{\wt x_n} -\f e_n) + \f S \f e_n - \f e_n^{\tau}  \\
	\f x_{n+1} = \f x_n + \lambda_n \left(\f{\wt x_n}-\f x_n\right).	
	\end{array}
	\right.
\end{align}
We now introduce the notations
\begin{align}\label{ic_def1.1}
		\f A_{\fK} := \f V^{-1}\left(\f M +\f S\right) \ \mbox{and} \ \f B_{\fK}:=\f V^{-1}\f Q
\end{align}
and the summable sequence with terms $\f e^{\f V}_n=\f V^{-1}\left((\f V + \f S) \f e_n - \f e_n^{\tau}\right)$ for any $n \geq 0$. Then, for any $n \geq 0$, we have
\begin{align}\label{ic_inc1.1}
	& \f V(\f x_n - \f{\wt x_n}) -\f Q \f x_n \in \left(\f M +\f S\right) (\f{\wt x_n} -\f e_n) + \f S \f e_n - \f e_n^{\tau} \notag \\
	\Leftrightarrow \ & \f V\f x_n - \f Q \f x_n \in \left(\f V + \f M +\f S\right)(\f{\wt x_n} -\f e_n) + (\f V + \f S) \f e_n - \f e_n^{\tau} \notag \\
	\Leftrightarrow \ & \f x_n - \f V^{-1} \f Q \f x_n \in \left(\id + \f V^{-1}\left(\f M +\f S\right)\right)(\f{\wt x_n} -\f e_n) + \f V^{-1}\left((\f V + \f S) \f e_n - \f e_n^{\tau} \right)  \notag \\
	\Leftrightarrow \ & \f{\wt x_n} = \left(\id + \f V^{-1}\left(\f M +\f S\right)\right)^{-1}\left(\fx_n - \f V^{-1} \f Q \f x_n - \f e^{\f V}_n \right) + \f e_n \notag \\
  \Leftrightarrow \ & \f{\wt x_n} = \left(\id + \f A_{\fK}\right)^{-1}\left(\fx_n - \f B_{\fK}\f x_n - \f e^{\f V}_n\right) + \f e_n.
\end{align}
Taking into account that the resolvent is Lipschitz continuous, the sequence having as terms
$$ \f e_n^{\f A_{\fK}} = J_{\f A_{\fK}} \left(\fx_n - \f B_{\fK}\f x_n - \f e^{\f V}_n\right) - J_{\f A_{\fK}} \left(\fx_n - \f B_{\fK}\f x_n\right) + \f e_n \ \forall n \geq 0$$
is summable and we have
$$\f{\wt x_n} = J_{\f A_{\fK}} \left(\fx_n - \f B_{\fK}\f x_n \right) +\f e_n^{\f A_{\fK}} \ \forall n \geq 0.$$
Thus, the iterative scheme in \eqref{ic_A1.2} becomes
\begin{align}
	\label{ic_A1.3}
	\left(\forall n\geq 0\right)  \left\lfloor \begin{array}{l}
	\f{\wt x_n} \approx J_{\f A_{\fK}}\left(\f x_n - \f B_{\fK} \f x_n\right)  \\
	\f x_{n+1} = \f x_n + \lambda_n(\f{\wt x_n}- \f x_n),	
	\end{array}
	\right.
\end{align}
which shows that the algorithm we propose in this subsection has the structure of a forward-backward method.

In addition, let us observe that
\begin{align*}
	\zer \left( \f A_{\fK} + \f B_{\fK}\right) = \zer \left( \f V^{-1}\left( \f M + \f S + \f Q\right) \right) = \zer \left( \f M + \f S + \f Q \right).
\end{align*}
We then introduce the Hilbert space $\fK_{\f V}$ with inner product and norm respectively defined, for $\fx,\fy \in \fK$, via
\begin{align}\label{ic_HSKV}
 \< \fx,\fy \>_{\fK_{\f V}} = \< \fx, \f V \fy \>_{\fK} \text{ and } \|\fx\|_{\fK_{\f V}} = \sqrt{\< \fx, \f V \fx \>_{\fK}}.
\end{align}
Since $\f M+ \f S$ and $\f Q$ are maximally monotone on $\fK$, the operators $\f A_{\fK}$ and $\f B_{\fK}$ are maximally monotone on $\fK_{\f V}$. Moreover, since $\f V$ is self-adjoint and $\rho$-strongly positive, one can easily see that weak and strong convergence in $\fK_{\f V}$ are equivalent with weak and strong convergence in $\fK$, respectively. By making use of $\|V^{-1}\|\leq \frac{1}{\rho}$, one can show that $\f B_{\fK}$ is $(\mu^{-1}\rho)$-cocoercive on $\fK_{\f V}$. Indeed, we get for $\f x,\,\f y \in \fK_{\f V}$ that (see, also, \cite[Eq. (3.35)]{Vu13})
\begin{align}
	\label{ic_cocoercive}
	\< \f x - \f y, \f B_{\fK}\f x - \f B_{\fK}\f y \>_{\fK_{\f V}}
	&= \< \f x-\f y, \f Q \f x-\f Q \f y\>_{\fK} \notag \\
	&\geq \mu^{-1} \|\f Q \f x - \f Q \f y\|_{\fK}^2 \notag \\
	&\geq \mu^{-1} \|\f V^{-1}\|^{-1} \|\f V^{-1} \f Q \f x - \f V^{-1} \f Q \f y\|_{\fK} \|\f Q \f x - \f Q \f y\|_{\fK} \notag \\
	&\geq \mu^{-1} \|\f V^{-1}\|^{-1} \< \f B_{\fK} \f x - \f B_{\fK}\f y, \f Q \f x - \f Q \f y \>_{\fK} \notag \\
	&= \mu^{-1} \|\f V^{-1}\|^{-1} \| \f B_{\fK} \f x - \f B_{\fK} \f y\|_{\fK_{\f V}}^2 \notag \\
	&\geq \mu^{-1} \rho  \| \f B_{\fK} \f x - \f B_{\fK} \f y\|_{\fK_{\f V}}^2.
\end{align}
As our assumption imposes that $2\mu^{-1}\rho>1$, we can use the statements given in \cite[Corollary 6.5]{Com04} in the context of an error tolerant forward-backward algorithm in order to establish the desired convergence results.

\ref{ic_th1.01} By Corollary 6.5 in \cite{Com04}, the sequence $(\f x_n)_{n \geq 0}$ converges weakly in $\fK_{\f V}$ (and therefore in $\fK$) to some $\f \bx = (\bx,\f \bp,\f \bq,\f \bz,\f \by,\f \bv) \in \zer \left( \f A_{\fK} + \f B_{\fK}\right) = \zer \left(\f M + \f S+ \f Q\right)$. By \eqref{ic_zer_opt_solution}, it thus follows that $(\bx, \f \bp, \f \bq, \f \by)$ is a primal-dual solution with respect to Problem \ref{ic_problem}.

\ref{ic_th1.02} From \cite{Com04} it follows
$$ \sum_{n \geq 0} \|\f B_{\fK} \f x_n - \f B_{\fK} \f \bx\|^2_{\fK_{\f V}} < + \infty, $$
and therefore we have $\f B_{\fK} \f x_n \rightarrow \f B_{\fK} \f \bx_n$ or, equivalently, $\f Q \f x_n \rightarrow \f Q \f \bx$ as $n \rightarrow +\infty$. Considering the definition of $\f Q$, one can see that this implies $C x_n \rightarrow C \bx$ as $n \rightarrow +\infty$. As $C$ is uniformly monotone, there exists an increasing function $\phi_C : \left[0, +\infty\right) \rightarrow \left[0,+\infty\right]$ vanishing only at $0$ such that
$$\phi_C(\|x_n-\bx\|) \leq \< x_n - \bx, Cx_n - C\bx \> \leq \|x_n-\bx\| \|Cx_n - C\bx\| \ \forall n \geq 0.$$
The boundedness of $(x_n-\bx)_{n\geq 0}$ and the convergence $Cx_n \rightarrow C \bx$ further imply that $x_n \rightarrow \bx$ as $n \rightarrow +\infty$.
\end{proof}

\begin{remark}
	\label{ic_remark_C0}
	Suppose that $C: \h \rightarrow \h$, $x\mapsto \{0\}$ in Problem \ref{ic_problem}. Then condition \eqref{ic_condition_step_length} simplifies to
	\begin{align*}
		\max\bigg\{\tau \sum_{i=1}^m \sigma_i \|L_i\|^2, \max_{j=1,\ldots,m}\left\{\theta_{1,j}\gamma_{1,j}\|K_j\|^2, \theta_{2,j}\gamma_{2,j}\|M_j\|^2\right\} \bigg\}<1.
	\end{align*}
Then the scheme \eqref{ic_A1.3} reads
	\begin{align}
		\label{ic_A1.10}
		\left(\forall n\geq 0\right)  \left\lfloor \begin{array}{l}
		\f x_{n+1} \approx \f x_n + \lambda_n(J_{\f A_{\fK}}\f x_n- \f x_n),
		\end{array}
		\right.
	\end{align}
and it can be shown to convergence under the relaxed assumption that $(\lambda_n)_{n\geq 0} \subseteq \left[\varepsilon,2-\varepsilon\right]$, for $\varepsilon \in (0,1)$ (see, for instance, \cite{Com01,Com04,EckBer92}).
\end{remark}

\begin{remark}
	\label{ic_remark_implementation_FB}
\begin{enumerate}[label={(\roman*)}]
	\setlength{\itemsep}{-2pt}
		\item \label{ic_rm1.01} When implementing Algorithm \ref{ic_alg1}, the term $L_i(2\wt x_{n}-x_n)$ should be stored in a separate variable for any $i =1,\ldots,m$. Taking this into account, each linear bounded operator occurring in Problem \ref{ic_problem} needs to be processed once via some forward evaluation and once via its adjoint.
    \item \label{ic_rm1.02} The maximally monotone operators $A$, $B_i$ and $D_i,\ i=1,\ldots,m,$ in Problem \ref{ic_problem} are accessed via their resolvents (so-called backward steps), also by taking into account the relation between the resolvent of a maximally monotone operator and its inverse given in \eqref{ic_res-identity}.
	\item \label{ic_rm1.04} The possibility of performing a forward step for the cocoercive monotone operator $C$ is an important aspect, since forward steps are usually much easier to implement than resolvents (resp. proximity operators). Due to the Baillon-Haddad theorem (cf. \cite[Corollary 18.16]{BauCom11}), each $\mu$-Lipschitzian gradient with $\mu \in \R_{++}$ of a convex and Fr\'echet differentiable function $f:\h \rightarrow \R$ is $\mu^{-1}$-cocoercive.
	\end{enumerate}	
\end{remark}

\subsection{An algorithm of forward-backward-forward type}\label{ic_subsecFBF}
In this subsection we propose a forward-backward-forward type algorithm for solving Problem \ref{ic_problem}, with the modification that the operator $C:\h \rightarrow \h$ is assumed to be $\mu$-Lipschitz continuous 
for some $\mu \in \R_{++}$, but not necessarily $\mu^{-1}$-cocoercive.

\begin{algorithm}\label{ic_alg2} \text{ }\newline
Let $x_0 \in \h$, and for any $i=1,...,m$, let $p_{i,0}\in\X_i$, $q_{i,0} \in \Y_i$, and $z_{i,0},\ y_{i,0},\ v_{i,0} \in \g_i$. Set
\begin{align}
	\label{ic_condition_beta}
	\beta = \mu + \sqrt{\max \bigg\{ \sum_{i=1}^m\|L_i\|^2, \max_{j=1,...,m}\big\{ \|K_j\|^2, \|M_j\|^2 \big\}  \bigg\}},
\end{align}
 let $\varepsilon \in \left (0,\frac{1}{\beta+1}\right)$, $(\gamma_n)_{n\geq 0}$ a sequence in $\left[\varepsilon,\frac{1-\varepsilon}{\beta}\right]$ and set
	\begin{align}\label{ic_A1_2}
	  \left(\forall n\geq 0\right) \begin{array}{l} \left\lfloor \begin{array}{l}
		\wt x_n \approx J_{\gamma_n A}\left( x_n - \gamma_n\left(Cx_n + \sum_{i=1}^m L_i^* v_{i,n} - z\right) \right) \\
		\text{For }i=1,\ldots,m  \\
				\left\lfloor \begin{array}{l}
					\wt p_{i,n} \approx J_{\gamma_n B_i^{-1}}\left(p_{i,n} +\gamma_n K_i z_{i,n}  \right) \\
					\wt q_{i,n} \approx J_{\gamma_n D_i^{-1}}\left(q_{i,n} +\gamma_n M_i y_{i,n}  \right) \\
					u_{1,i,n} \approx z_{i,n} -\gamma_n\left(K_i^*p_{i,n}-v_{i,n}-\gamma_n\left(L_ix_n-r_i\right)\right) \\
					u_{2,i,n} \approx y_{i,n} -\gamma_n\left(M_i^*q_{i,n}-v_{i,n}-\gamma_n\left(L_ix_n-r_i\right)\right) \\
					\wt z_{i,n} \approx \frac{1+\gamma_n^2}{1+2\gamma_n^2}\left(u_{1,i,n}-\frac{\gamma_n^2}{1+\gamma_n^2} u_{2,i,n}\right) \\
					\wt y_{i,n} \approx \frac{1}{1+\gamma_n^2}\left(u_{2,i,n} -\gamma_n^2 \wt z_{i,n}\right) \\
					\wt v_{i,n} \approx v_{i,n} + \gamma_n\left(L_ix_n-r_i - \wt z_{i,n}- \wt y_{i,n}\right) 
				\end{array} \right.\\
		x_{n+1} \approx \wt x_n + \gamma_n(Cx_n-C \wt x_n + \sum_{i=1}^m L_i^*(v_{i,n}-\wt v_{i,n})) \\
		\text{For }i=1,\ldots,m  \\
				\left\lfloor \begin{array}{l}
					p_{i,n+1} \approx \wt p_{i,n} - \gamma_n(K_i(z_{i,n}-\wt z_{i,n}))\\
					q_{i,n+1} \approx \wt q_{i,n} - \gamma_n(M_i(y_{i,n}-\wt y_{i,n}))\\
					z_{i,n+1} \approx \wt z_{i,n} + \gamma_n(K_i^*(p_{i,n}-\wt p_{i,n}))\\
					y_{i,n+1} \approx \wt y_{i,n} + \gamma_n(M_i^*(q_{i,n}-\wt q_{i,n}))\\
					v_{i,n+1} \approx \wt v_{i,n} - \gamma_n(L_i(x_n- \wt x_n)).
				\end{array} \right. \\ \vspace{-4mm}
		\end{array}
		\right.
		\end{array}
	\end{align}
\end{algorithm}

\begin{theorem}\label{ic_th2}
In Problem \ref{ic_problem}, let $C:\h \rightarrow \h$ be $\mu$-Lipschitz continuous for $\mu\in\R_{++}$, suppose that
\begin{align}\label{ic_th2_inclusion}
	z \in \ran \bigg( A +  \sum_{i=1}^m L_i^*\Big(\big(K_i^*\circ B_i\circ K_i \big)\Box \big(M_i^*\circ D_i\circ M_i\big)\Big)(L_i \cdot -r_i) +C \bigg),
\end{align}
and consider the sequences generated by Algorithm \ref{ic_alg2}. Then there exists a primal-dual solution $(\bx, \f \bp, \f \bq, \f \by)$ to Problem \ref{ic_problem} such that 
\begin{enumerate}[label={(\roman*)}]
	\setlength{\itemsep}{-2pt}
		\item  \label{ic_th2.01} $\sum_{n\geq 0}\|x_n- \wt x_n\|^2 < +\infty$ and for any $i=1,...,m$
		$$\sum_{n\geq 0} \|p_{i,n} - \wt p_{i,n}\|^2 < +\infty, \ \sum_{n\geq 0} \|q_{i,n} - \wt q_{i,n}\|^2 < +\infty   \text{ and }\sum_{n\geq 0} \|y_{i,n} - \wt y_{i,n}\|^2 < +\infty.$$
		\item  \label{ic_th2.02} $x_n \rightharpoonup \bx$, $\wt x_n \rightharpoonup \bx$, and for any $i=1,...,m$
		\begin{align*}
			 \left\{\begin{array}{l} p_{i,n}\rightharpoonup \bp_{i,n} \\ \wt p_{i,n}\rightharpoonup \bp_{i,n} \end{array}\right., \
		   \left\{\begin{array}{l} q_{i,n}\rightharpoonup \bq_{i,n} \\ \wt q_{i,n}\rightharpoonup \bq_{i,n} \end{array}\right. \text{ and }
			 \left\{\begin{array}{l} y_{i,n}\rightharpoonup \by_{i,n} \\ \wt y_{i,n}\rightharpoonup \by_{i,n} \end{array}\right.\!\!.
		\end{align*}
	\end{enumerate}
\end{theorem}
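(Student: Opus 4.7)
The plan is to reduce Algorithm \ref{ic_alg2} to an error-tolerant version of Tseng's forward-backward-forward splitting scheme in the product Hilbert space $\fK = \h \oplus \f \X \oplus \f \Y \oplus \f \g \oplus \f \g \oplus \f \g$, mirroring the reformulation used in the proof of Theorem \ref{ic_th1}. First I would reuse the maximally monotone operator $\f M$, the skew linear bounded operator $\f S$ and the monotone operator $\f Q$ defined there. Exactly as in \eqref{ic_zer_opt_solution}, the hypothesis \eqref{ic_th2_inclusion} is equivalent to $\zer(\f M + \f S + \f Q) \neq \varnothing$, and every element of that zero set projects to a primal-dual solution of Problem \ref{ic_problem}. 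Hence the task reduces to proving weak convergence of the iterates of \eqref{ic_A1_2}, suitably assembled in $\fK$, to a point of $\zer(\f M + \f S + \f Q)$.

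Next I would identify \eqref{ic_A1_2} as an inexact Tseng iteration for the inclusion $0 \in \f M \f x + (\f S + \f Q) \f x$, with $\f M$ treated by its resolvent and $\f S + \f Q$ treated by two forward evaluations per outer step. Two checks are needed. First, $\f Q$ is $\mu$-Lipschitz because $C$ is, and a block-wise Cauchy--Schwarz estimate gives $\|\f S\|^2 \le \max\{\sum_{i=1}^m \|L_i\|^2, \max_{j=1,\ldots,m}\{\|K_j\|^2, \|M_j\|^2\}\}$, so $\f S + \f Q$ is $\beta$-Lipschitz with the constant $\beta$ of \eqref{ic_condition_beta}. Second, the explicit updates for $(\wt x_n, \wt{\f p}_n, \wt{\f q}_n, \wt{\f z}_n, \wt{\f y}_n, \wt{\f v}_n)$ must coincide, up to summable errors, with $J_{\gamma_n \f M}(\f x_n - \gamma_n (\f S + \f Q) \f x_n)$, and the final block of updates must realize $\f x_{n+1} \approx \wt{\f x}_n + \gamma_n((\f S + \f Q)\f x_n - (\f S + \f Q)\wt{\f x}_n)$. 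The $x$, $\f p$ and $\f q$ blocks of the resolvent are block-separable and are visibly correct. The remaining three blocks are coupled because $\f M$ acts on $(\f z,\f y,\f v)$ as $(-\f v,-\f v,\f r+\f z+\f y)$, so computing $J_{\gamma_n \f M}$ there amounts to solving, for each $i$, a $3\times 3$ linear system in $(\wt z_{i,n},\wt y_{i,n},\wt v_{i,n})$. Eliminating the $v$-variable yields exactly the closed-form formulas for $u_{1,i,n}$, $u_{2,i,n}$ and then for $\wt z_{i,n}, \wt y_{i,n}, \wt v_{i,n}$ in Algorithm \ref{ic_alg2}, while the outer block of updates is Tseng's correction step written out coordinate by coordinate.

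With the splitting identified, statements \ref{ic_th2.01} and \ref{ic_th2.02} follow from the standard convergence theorem for the error-tolerant forward-backward-forward splitting method (see, for instance, \cite{Tse00,BriCom11,Vu13}) applied under the step-size restriction $\gamma_n \in [\varepsilon,(1-\varepsilon)/\beta]$: it delivers $\sum_n \|\f x_n - \wt{\f x}_n\|^2_{\fK} < +\infty$ together with weak convergence of both $(\f x_n)_{n\ge 0}$ and $(\wt{\f x}_n)_{n\ge 0}$ in $\fK$ to a common zero of $\f M + \f S + \f Q$, from which the componentwise conclusions on $x_n,\wt x_n, p_{i,n},\wt p_{i,n}, q_{i,n},\wt q_{i,n}, y_{i,n},\wt y_{i,n}$ are immediate. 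I expect the main obstacle to lie in the bookkeeping: propagating the approximation errors hidden in the $\approx$ signs through the coupled resolvent and through the two forward evaluations, and checking that they remain summable in the sense of \eqref{ic_errors}. This is routine once one observes that the coupled resolvent is a fixed linear map of its inputs which is uniformly Lipschitz in $\gamma_n$ (the relevant matrix has determinant $1 + 2\gamma_n^2$, bounded away from $0$), and that, by Remark \ref{ic_remark_errors}, summability is preserved under additions, scalar multiplications and linear bounded operators.
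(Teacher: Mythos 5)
Your proposal is correct and follows essentially the same route as the paper: it reuses the operators $\f M$, $\f S$, $\f Q$ on $\fK$ from the proof of Theorem \ref{ic_th1}, takes the splitting $\f A_{\fK}=\f M$, $\f B_{\fK}=\f S+\f Q$ with the Lipschitz constant $\beta$ obtained by the same Cauchy--Schwarz estimate, verifies that \eqref{ic_A1_2} is an inexact Tseng iteration with summable errors, and invokes the convergence theorem for the error-tolerant forward-backward-forward method (the paper cites \cite[Theorem 2.5]{BriCom11}) before translating back via \eqref{ic_zer_opt_solution}. No gaps.
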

\begin{proof}
	As in the proof of Theorem \ref{ic_th1}, consider $\fK=\h  \oplus \f \X \oplus \f \Y \oplus \f \g \oplus \f \g \oplus \f \g$ along with the notations introduced in \eqref{ic_hilbert2}. Further, let the operators
	$\f M:\fK \rightarrow 2^{\fK}$, $\f S:\fK \rightarrow \fK$ and $\f Q:\fK \rightarrow \fK$ be defined as in the proof of the same result. 
	The operator $\f S + \f Q$ is monotone, Lipschitz continuous, hence maximally monotone (cf. \cite[Corollary 20.25]{BauCom11}), and it fulfills $\dom (\f S+\f Q)=\fK$. 
	Therefore the sum $\f M + \f S + \f Q$ is maximally monotone as well (see \cite[Corollary 24.4(i)]{BauCom11}). 
	
In the following we derive the Lipschitz constant of $\f S + \f Q$. For arbitrary
$$ 	\f x = (x,\f p,\f q, \f z, \f y , \f v) \ \text{ and } \ \f{\wt x} = (\wt x,\f{\wt p},\f{\wt q},\f{\wt z},\f{\wt y},\f{\wt v}) \in \fK, $$
by using the Cauchy-Schwarz inequality it yields,
{\allowdisplaybreaks
\begin{align}
	\label{ic_Lipschitz_constant}
	&\|(\f S + \f Q)\f x  - (\f S + \f Q)\f{\wt x} \| \leq \|\f Q \f x  - \f Q\f{\wt x} \| + \|\f S \f x  - \f S\f{\wt x} \| \notag \\
	&\leq \mu \|x- \wt x \| + \Bigg\| \bigg(\sum_{i=1}^mL_i^*(v_i-\wt v_i),-\wt K (\f z -\f{\wt z}),-\wt M (\f y -\f{\wt y}),\wt K^* (\f p- \f{\wt p}), \notag\\
	& \hspace{3cm}\wt M^* (\f q- \f{\wt q}),-L_1(x-\wt x),\ldots,-L_m(x-\wt x) \bigg)\Bigg\| \notag\\
	&= \mu \|x-\wt x \| + \bigg( \Big\|\sum_{i=1}^mL_i^*(v_i-\wt v_i)\Big\|^2 + \sum_{i=1}^m \Big[\|K_i(z_i-\wt z_i)\|^2 + \|M_i(y_i-\wt y_i)\|^2 \notag\\
	& \hspace{2.7cm} +\|K_i^*(p_i-\wt p_i)\|^2 + \|M_i^*(q_i-\wt q_i)\|^2 + \|L_i(x-\wt x)\|^2 \Big] \bigg)^{\frac{1}{2}} \notag\\
	&\leq \mu \|x-\wt x \| + \bigg( \Big( \sum_{i=1}^m \|L_i\|^2\Big)\Big( \|x-\wt x\|^2 + \sum_{i=1}^m \|v_i-\wt v_i\|^2 \Big) + \sum_{i=1}^m \Big[\|K_i\|^2 \|z_i-\wt z_i\|^2 \notag \\
	& \hspace{2.7cm} +\|M_i\|^2\|y_i-\wt y_i\|^2+ \|K_i\|^2 \|p_i-\wt p_i\|^2 + \|M_i\|^2\|q_i-\wt q_i\|^2 \Big] \bigg)^{\frac{1}{2}} \notag\\
	&\leq \left( \mu + \sqrt{\max \bigg\{ \sum_{i=1}^m\|L_i\|^2, \max_{j=1,...,m}\big\{ \|K_j\|^2, \|M_j\|^2 \big\}  \bigg\}} \right) \|\f x - \f{\wt x}\|.
\end{align}}
In the following we use the sequences in \eqref{ic_error_sequences} for modeling summable errors in the implementation. In addition we consider the summable sequences in $\fK$ with terms defined for any $n \geq 0$ as
$$ \f e_n=(a_n, \f b_n, \f d_n, \f 0, \f 0, \f 0) \text{ and } \f{\wt e}_n = (\f 0, \f 0,\f 0, \f e_{1,n}, \f e_{2,n}, \f e_{3,n}).$$
Note that \eqref{ic_A1_2} can equivalently be written as
\begin{align}\label{ic_A1.1_2}
	\left(\forall n\geq 0\right) \begin{array}{l} \left\lfloor \begin{array}{l}
		x_n - \gamma_n\big( Cx_n + \sum_{i=1}^m L_i^* v_{i,n}\big) \in \big( \id +\gamma_n(-z +A)\big)(\wt x_n-a_n) \\
		\text{For }i=1,\ldots,m  \\
				\left\lfloor \begin{array}{l}
					p_{i,n} + \gamma_n K_iz_{i,n} \in \big( \id + \gamma_n B_i^{-1} \big)(\wt p_{i,n}-b_{i,n}) \\
					q_{i,n} + \gamma_n M_iy_{i,n} \in \big( \id + \gamma_n D_i^{-1} \big)(\wt q_{i,n}-d_{i,n}) \\
					z_{i,n} - \gamma_nK_i^*p_{i,n} = \wt z_{i,n} - \gamma_n \wt v_{i,n} - e_{1,i,n} \\
					y_{i,n} - \gamma_nM_i^*q_{i,n} = \wt y_{i,n} - \gamma_n \wt v_{i,n} - e_{2,i,n} \\
					v_{i,n} + \gamma_n L_i x_n = \wt v_{i,n} + \gamma_n(r_i + \wt z_{i,n}+ \wt y_{i,n}) -e_{3,i,n}
				\end{array} \right.\\
		x_{n+1} \approx \wt x_n + \gamma_n(Cx_n-C \wt x_n + \sum_{i=1}^m L_i^*(v_{i,n}-\wt v_{i,n})) \\
		\text{For }i=1,\ldots,m  \\
				\left\lfloor \begin{array}{l}
					p_{i,n+1} \approx \wt p_{i,n} - \gamma_n(K_i(z_{i,n}-\wt z_{i,n}))\\
					q_{i,n+1} \approx \wt q_{i,n} - \gamma_n(M_i(y_{i,n}-\wt y_{i,n}))\\
					z_{i,n+1} \approx \wt z_{i,n} + \gamma_n(K_i^*(p_{i,n}-\wt p_{i,n}))\\
					y_{i,n+1} \approx \wt y_{i,n} + \gamma_n(M_i^*(q_{i,n}-\wt q_{i,n}))\\
					v_{i,n+1} \approx \wt v_{i,n} - \gamma_n(L_i(x_n- \wt x_n)).
				\end{array} \right. \\ \vspace{-4mm}
		\end{array}
		\right.
		\end{array}
\end{align}
Therefore, \eqref{ic_A1.1_2} is nothing else than
\begin{align}
	\label{ic_A1.2_2}
	\left(\forall n\geq 0\right)  \left\lfloor \begin{array}{l}
	\f x_n - \gamma_n (\f S + \f Q) \f x_n  \in \left(\id + \gamma_n\f M \right)(\f{\wt x_n} -\f e_n) -\f{\wt e}_n \\
	\f x_{n+1} \approx \f{\wt x_n} + \gamma_n \left((\f S + \f Q) \f x_n - (\f S + \f Q) \f p_n\right).	
	\end{array}
	\right.
\end{align}
We now introduce the notations
\begin{align}\label{ic_def2.1}
		\f A_{\fK} := \f M \ \mbox{and} \ \f B_{\fK}:=\f S + \f Q.
\end{align}
Then \eqref{ic_A1.2_2} is
\begin{align}
	\label{ic_A1.3_2}
	\left(\forall n\geq 0\right)  \left\lfloor \begin{array}{l}
	\f{\wt x_n} = J_{\gamma_n \f A_{\fK}}\left(\f x_n - \gamma_n \f B_{\fK} \f x_n + \f{\wt e}_n \right) + \f e_n \\
	\f x_{n+1} \approx \f{\wt x_n} + \gamma_n \left(\f B_{\fK} \f x_n - \f B_{\fK} \f{\wt x_n} \right).	
	\end{array}
	\right.
\end{align}
We observe that for 
$$\f e^{\fK}_n:=J_{\gamma_n \f A_{\fK}}\left(\f x_n - \gamma_n \f B_{\fK} \f x_n + \f{\wt e}_n \right) -J_{\gamma_n\f A_{\fK}}\left(\f x_n - \gamma_n \f B_{\fK} \f x_n \right) + \f e_n,$$
one has $\f{\wt x_n} = J_{\gamma_n \f A_{\fK}}\left(\f x_n - \gamma_n \f B_{\fK} \f x_n \right) + \f e^{\fK}_n$ for any $n \geq 0$ and it holds 
\begin{align*}
	\sum_{n\geq 0} \|\f e^{\fK}_n\| &= \sum_{n\geq 0} \|J_{\gamma_n \f A_{\fK}}\left(\f x_n - \gamma_n \f B_{\fK} \f x_n + \f{\wt e}_n \right) -J_{\gamma_n\f A_{\fK}}\left(\f x_n - \gamma_n \f B_{\fK} \f x_n \right) + \f e_n\| \\
	&\leq \sum_{n\geq 0} \left[ \|J_{\gamma_n\f A_{\fK}}\left(\f x_n - \gamma_n \f B_{\fK} \f x_n + \f{\wt e}_n \right) -J_{\gamma_n\f A_{\fK}}\left(\f x_n - \gamma_n \f B_{\fK} \f x_n \right)\| + \| \f e_n\| \right] \\
	&\leq \sum_{n\geq 0} \left[ \| \f{\wt e}_n\| + \| \f e_n\| \right] < +\infty.
\end{align*}
Thus, \eqref{ic_A1.3_2} becomes
\begin{align}
	\label{ic_A1.4_2}
	\left(\forall n\geq 0\right)  \left\lfloor \begin{array}{l}
	\f{\wt x_n} \approx J_{\gamma_n\f A_{\fK}}\left(\f x_n - \gamma_n \f B_{\fK} \f x_n \right) \\
	\f x_{n+1} \approx \f{\wt x_n} + \gamma_n \left(\f B_{\fK} \f x_n - \f B_{\fK} \f{\wt x_n}\right),	
	\end{array}
	\right.
\end{align} 
which is an error-tolerant forward-backward-forward method in $\fK$ whose convergence has been investigated in \cite{BriCom11}. Note that the exact version of this algorithm was proposed by Tseng in \cite{Tse00}.

\ref{ic_th2.01} By \cite[Theorem 2.5(i)]{BriCom11} we have $$ \sum_{n\geq 0} \|\f x_n - \f{\wt x_n}\|^2 < + \infty, $$ which yields $\sum_{n\geq 0} \|x_n -\wt x_n\|^2 < +\infty$ and for any $i=1,...,m$,
\begin{align*}
	 \sum_{n\geq 0} \|p_{i,n} - \wt p_{i,n}\|^2 < +\infty, \ \sum_{n\geq 0} \|q_{i,n} - \wt q_{i,n}\|^2 < +\infty   \text{ and }\sum_{n\geq 0} \|y_{i,n} - \wt y_{i,n}\|^2 < +\infty.
\end{align*}

\ref{ic_th2.02} Let $\f \bx=(\bx,\f \bp, \f \bq, \f \bz, \f \by, \f \bv) \in \zer (\f M + \f S + \f Q)$. Using \cite[Theorem 2.5(ii)]{BriCom11}, we obtain $\f x_n \rightharpoonup \f \bx$ and $\f{\wt x_n} \rightharpoonup \f \bx$. 
In consideration of \eqref{ic_zer_opt_solution}, it follows that $(\bx,\f \bp, \f \bq,\f \bv)$ is a primal-dual solution to Problem \ref{ic_problem}, $x_n \rightharpoonup \bx$, $\wt x_n \rightharpoonup \bx$, and for $i=1,...,m$
		\begin{align*}
			 \left\{\begin{array}{l} p_{i,n}\rightharpoonup \bp_{i,n} \\ \wt p_{i,n}\rightharpoonup \bp_{i,n} \end{array}\right., \
		   \left\{\begin{array}{l} q_{i,n}\rightharpoonup \bq_{i,n} \\ \wt q_{i,n}\rightharpoonup \bq_{i,n} \end{array}\right., \text{ and }
			 \left\{\begin{array}{l} y_{i,n}\rightharpoonup \by_{i,n} \\ \wt y_{i,n}\rightharpoonup \by_{i,n} \end{array}\right.\!\!.
		\end{align*}
\end{proof}

\begin{remark}
	\label{ic_remark_implementation_FBF}
\begin{enumerate}[label={(\roman*)}]
	\setlength{\itemsep}{-2pt}
		\item \label{ic_rm2.01} In contrast to Algorithm \ref{ic_alg1}, the iterative scheme in Algorithm \ref{ic_alg2} requires twice the amount of forward steps and is therefore more time-intensive. On the other hand, many steps in Algorithm \ref{ic_alg2} can be processed in parallel.
		\item \label{ic_rm2.02}  A related monotone inclusion problem involving linearly composed parallel sums of maximally monotone operators was investigated in \cite{BecCom13}, by proposing an iterative scheme which can be also reduced
		to a forward-backward-forward type iterative scheme. However, the algorithm there is different to the one given in Algorithm \ref{ic_alg2}.
	\end{enumerate}	
\end{remark}

\section{Application to convex minimization}\label{ic_secConvexMinimization}

In this section we employ the algorithms introduced in the previous one in the context of solving primal-dual pairs of convex optimization problems. The problem under consideration is as follows.

\begin{problem}\label{ic_problem_convex}
Let $\h$ be a real Hilbert space, $z \in \h$ and $f,\ h\in \Gamma(\h)$ such that $h$ is differentiable with $\mu$-Lipschitzian gradient for $\mu\in\R_{++}$.  Furthermore, for every $i=1,\ldots,m$, let $\g_i$, $\X_i$, $\Y_i$ be real Hilbert spaces, $r_i\in \g_i$, let $g_i \in \Gamma(\X_i)$ and $l_i \in \Gamma(\Y_i)$ and consider the nonzero linear bounded operators $L_i:\h \rightarrow \g_i$, $K_i:\g_i \rightarrow \X_i$ and $M_i:\g_i \rightarrow \Y_i$. Then we solve the primal optimization problem
\begin{align}
	\label{ic_p_primal_convex}
	\inf_{x\in \h}\bigg\{ f(x) + \sum_{i=1}^m \Big( \big(g_i \circ K_i\big) \Box \big(l_i \circ M_i\big) \Big)(L_i x -r_i) + h(x) -\<x,z\> \bigg\}
\end{align}
together with its conjugate dual problem
\begin{align}
	\label{ic_p_dual_convex}
	\sup_{\substack{(\f p, \f q) \in \f \X \oplus \f \Y,\\K_i^*p_i = M_i^*q_i,\ i=1,\ldots,m}}\!\!\bigg\{- (f^* \Box h^*)\left(z-\sum_{i=1}^m L_i^*K_i^*p_i\right) - \sum_{i=1}^m\Big[g_i^*(p_i)+ l_i^*(q_i) +\<p_i,K_ir_i\>\Big]\bigg\}.
\end{align}
\end{problem}
For every $x \in \h$ and $(\f p, \f q) \in \f \X \oplus \f \Y$ with $K_i^*p_i = M_i^*q_i$, $i=1,\ldots,m$, by the Young-Fenchel inequality, it holds
$$f(x) + h(x) +  (f^* \Box h^*)\left(z-\sum_{i=1}^m L_i^*K_i^*p_i\right) \geq \left \langle z-\sum_{i=1}^m L_i^*K_i^*p_i, x \right \rangle $$
and, for any $i=1,\ldots,m$ and $y_i \in \g$,
$$g_i(K_i(L_ix - r_i -y_i)) + g_i^*(p_i) \geq \langle p_i, K_i(L_ix - r_i -y_i)  \rangle = \langle K_i^*p_i, L_ix - r_i -y_i \rangle$$
and
$$l_i(M_iy_i) + l_i^*(q_i) \geq \langle q_i, M_iy_i \rangle = \langle M_i^*q_i, y_i\rangle.$$
This yields
\begin{align}\label{ic_wd}
&\inf_{x\in \h}\bigg\{ f(x) + \sum_{i=1}^m \Big( \big(g_i \circ K_i\big) \Box \big(l_i \circ M_i\big) \Big)(L_i x -r_i) + h(x) -\<x,z\> \bigg\} \nonumber\\
 = &\inf_{(x,\f y)\in \h \oplus \f \g}\bigg\{ f(x) + \sum_{i=1}^m \Big( g_i (K_i(L_ix-r_i-y_i)) + l_i(M_iy_i) \Big) + h(x) -\<x,z\> \bigg\} \\
\geq  & \!\!\!\sup_{\substack{(\f p, \f q) \in \f \X \oplus \f \Y,\\K_i^*p_i = M_i^*q_i,\ i=1,\ldots,m}}\!\!\bigg\{- (f^* \Box h^*)\left(z-\sum_{i=1}^m L_i^*K_i^*p_i\right) - \sum_{i=1}^m\Big[g_i^*(p_i)+ l_i^*(q_i) +\<p_i,K_ir_i\>\Big]\bigg\},\nonumber
\end{align}
which means that for the primal-dual pair of optimization problems \eqref{ic_p_primal_convex}-\eqref{ic_p_dual_convex} weak duality is always given.

Considering $(\bx, \f \bp, \f \bq, \f \by) \in \h \oplus \f \X \oplus \f \Y \oplus \f \g$  a solution of the primal-dual system of monotone inclusions
\begin{align}
\label{ic_condition_convex}
\begin{aligned}
& z - \sum_{i=1}^mL_i^*K_i^*\bp_i \in \partial f(\bx) +\nabla h(\bx) \ \mbox{and} \\
& K_i(L_i\bx -\by_i-r_i) \in \partial g_i^*(\bp_i),\ M_i\by_i \in \partial l_i^{*}(\bq_i),\  K_i^*\bp_i = M_i^*\bq_i,\ i=1,\ldots,m.
\end{aligned}
\end{align}
it follows that $\bx$ is an optimal solution to \eqref{ic_p_primal_convex} and that $(\f \bp, \f \bq)$ is an optimal solution to \eqref{ic_p_dual_convex}. Indeed, as $h$ is convex and everywhere differentiable, it holds
$$z - \sum_{i=1}^mL_i^*K_i^*\bp_i \in \partial f(\bx) +\nabla h(\bx) \subseteq \partial (f+h)(\bx),$$
thus,
$$f(\bx) + h(\bx) + (f^* \Box h^*)\left(z-\sum_{i=1}^m L_i^*K_i^*\bp_i\right) = \left \langle z-\sum_{i=1}^m L_i^*K_i^*\bp_i, \bx \right \rangle.$$
On the other hand, since $g_i \in \Gamma(\X_i)$ and $l_i \in \Gamma(\Y_i)$, we have for any $i=1,\ldots,m$
$$g_i(K_i(L_i\bx -\by_i-r_i)) + g_i^*(\bp_i) =  \langle K_i^*\bp_i, L_i\bx - r_i -\by_i \rangle$$
and
$$l_i(M_i\by_i) + l_i^*(\bq_i) = \langle M_i^*\bq_i, \by_i\rangle.$$
By summing up these equations and using \eqref{ic_condition_convex}, it yields
\begin{align*}
&f(\bx) + \sum_{i=1}^m \Big( \big(g_i \circ K_i\big) \Box \big(l_i \circ M_i\big) \Big)(L_i \bx -r_i) + h(\bx) -\<\bx,z\>  \\
  \leq\ &f(\bx) + \sum_{i=1}^m \Big( g_i (K_i(L_i\bx-r_i-\by_i)) + l_i(M_i\by_i) \Big) + h(\bx) -\<\bx,z\>   \\
 =\ &- (f^* \Box h^*)\left(z-\sum_{i=1}^m L_i^*K_i^*\bp_i\right) - \sum_{i=1}^m\Big[g_i^*(\bp_i)+ l_i^*(\bq_i) +\<\bp_i,K_ir_i\>\Big],
\end{align*}
which, together with \eqref{ic_wd}, leads to the desired conclusion.

In the following, by extending the result in \cite[Proposition 4.2]{BecCom13} to our setting, we provide sufficient conditions which guarantee the validity of \eqref{ic_th1_inclusion} when applied to convex minimization problems. To this end we mention that the \textit{strong quasi-relative interior} of a nonempty convex set $\Omega \subseteq \h$ is defined as
$$\sqri \Omega=\left \{x \in \Omega: \bigcup_{\lambda \geq 0} \ \lambda(\Omega-x) \ \mbox{is a closed linear subspace} \right\}.$$

\begin{proposition}
	\label{ic_th_ran_convex}
Suppose that the primal problem \eqref{ic_p_primal_convex} has an optimal solution, that
	\begin{align}
		\label{ic_sri_convex}
		0 \in \sqri \left( \dom (g_i \circ K_i)^* - \dom (l_i \circ M_i)^*\right),\ i=1,\ldots,m
	\end{align}
and
\begin{align}\label{ic_E_convex}
0 \in \sqri \f E,
\end{align}
where
\begin{align*}
\f E\!:=\!\!\bigg\{\!\bigtimes\limits_{i=1}^m\!\Big\{K_i(L_i(\dom f)\!-\!r_i\!-\!y_i)\!-\!\dom g_i \Big\} \!\times \!\bigtimes\limits_{i=1}^m\!\Big\{M_iy_i\!-\!\dom l_i\Big\} : y_i \in \g_i,\, i=1,...,m \bigg\}.	
\end{align*}

Then 
\begin{align*}
	z \in \ran \bigg( \partial f +  \sum_{i=1}^m L_i^*\left((K_i^*\circ \partial g_i \circ K_i )\Box (M_i^*\circ \partial l_i \circ M_i)\right)(L_i \cdot - r_i) +\nabla h \bigg).
\end{align*}
\end{proposition}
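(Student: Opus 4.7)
Let $\bx$ be an optimal solution of \eqref{ic_p_primal_convex}. The condition \eqref{ic_sri_convex} is the classical Attouch--Brezis assumption ensuring that, for every $i$, the infimal convolution $(g_i\circ K_i)\Box(l_i\circ M_i)$ is lower semicontinuous and exact (cf.\ \cite{Bot10,Zal02}). Consequently, for each $i$ there exists $\by_i\in\g_i$ with
\begin{align*}
((g_i\circ K_i)\Box(l_i\circ M_i))(L_i\bx - r_i) = g_i(K_i(L_i\bx - r_i - \by_i)) + l_i(M_i\by_i),
\end{align*}
so the equality chain in \eqref{ic_wd} shows that $(\bx,\f\by)$ is an optimal solution of the middle minimization problem in \eqref{ic_wd}. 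Setting
\begin{align*}
\Psi(x,\f y) := f(x) + h(x) - \langle x,z\rangle + \sum_{i=1}^m \left[g_i(K_i(L_ix-r_i-y_i)) + l_i(M_iy_i)\right],
\end{align*}
Fermat's rule gives $(0,\f 0)\in\partial\Psi(\bx,\f\by)$.

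Next I would apply the subdifferential sum and chain rules to $\Psi$. Writing $\Psi(x,\f y) = [f(x)+h(x)-\langle x,z\rangle] + \Phi(\f A(x,\f y))$ with
\begin{align*}
\Phi(\f u,\f v) := \sum_{i=1}^m g_i(u_i) + \sum_{i=1}^m l_i(v_i), \qquad \f A(x,\f y) := \left((K_i(L_ix-r_i-y_i))_{i=1}^m,(M_iy_i)_{i=1}^m\right),
\end{align*}
the image $\f A(\dom f\oplus\f\g) - \dom\Phi$ is exactly the set $\f E$. Thus \eqref{ic_E_convex} is precisely the Attouch--Brezis qualification (see \cite{BotCseHei12,Bot10,Zal02}) which delivers the sum/chain rule
\begin{align*}
\partial\Psi(\bx,\f\by) = \left(\partial f(\bx) + \nabla h(\bx) - z\right) \times \{\f 0\} + \f A^*\,\partial\Phi(\f A(\bx,\f\by)),
\end{align*}
while $\Phi$ being a separable sum yields the componentwise decomposition of $\partial\Phi$ into the $\partial g_i$ and $\partial l_i$. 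Computing $\f A^*$ and inserting into $(0,\f 0)\in\partial\Psi(\bx,\f\by)$ produces elements $\bp_i\in\partial g_i(K_i(L_i\bx-r_i-\by_i))$ and $\bq_i\in\partial l_i(M_i\by_i)$ satisfying $K_i^*\bp_i=M_i^*\bq_i$ for all $i$ and
\begin{align*}
z - \sum_{i=1}^m L_i^*K_i^*\bp_i \in \partial f(\bx) + \nabla h(\bx).
\end{align*}

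Finally, I would translate these relations into the parallel-sum form required by the statement. From $\bp_i\in\partial g_i(K_i(L_i\bx-r_i-\by_i))$ one deduces $L_i\bx - r_i - \by_i \in (K_i^*\circ\partial g_i\circ K_i)^{-1}(K_i^*\bp_i)$; from $\bq_i\in\partial l_i(M_i\by_i)$ together with $K_i^*\bp_i = M_i^*\bq_i$ one similarly obtains $\by_i\in(M_i^*\circ\partial l_i\circ M_i)^{-1}(K_i^*\bp_i)$. Summing these two inclusions and invoking the very definition of the parallel sum yields
\begin{align*}
K_i^*\bp_i \in \big[(K_i^*\circ\partial g_i\circ K_i)\Box(M_i^*\circ\partial l_i\circ M_i)\big](L_i\bx - r_i),
\end{align*}
which, substituted back into the displayed inclusion above, gives
\begin{align*}
z \in \partial f(\bx) + \nabla h(\bx) + \sum_{i=1}^m L_i^*\big[(K_i^*\circ\partial g_i\circ K_i)\Box(M_i^*\circ\partial l_i\circ M_i)\big](L_i\bx - r_i),
\end{align*}
as required. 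The one genuinely delicate point is the identification of $\f E$ as the Fenchel--Rockafellar qualification set for $\Psi = F + \Phi\circ\f A$; once this reformulation is carried out precisely, the remainder is routine subdifferential calculus and the definition of the parallel sum, in the spirit of \cite[Proposition 4.2]{BecCom13}.
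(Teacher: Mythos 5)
Your proposal is correct and follows essentially the same route as the paper's proof: exactness of the infimal convolutions via the Attouch--Brezis condition \eqref{ic_sri_convex} to produce $\f\by$, Fermat's rule for the lifted problem in $(x,\f y)$, the sum/chain rule under the qualification \eqref{ic_E_convex} identified with the set $\f L(\dom\f f)-\dom\f g$, and the final translation into the parallel-sum inclusion via inversion. The only cosmetic difference is that you absorb the shifts $r_i$ into the (affine) composition map rather than into the inner function as the paper does, which changes nothing of substance.
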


\begin{proof}
Let $\bx \in \h$ be an optimal solution to  \eqref{ic_p_primal_convex}. Since \eqref{ic_E_convex} holds, we have that $(g_i\circ K_i)$, $(l_i\circ M_i) \in \Gamma(\g_i)$, $i=1,\ldots,m$. Further, because of \eqref{ic_sri_convex}, 
\cite[Proposition 15.7]{BauCom11} guarantees for any $i=1,\ldots,m$ the existence of $\by_i \in \g_i$ such that 
$$ \big((g_i \circ K_i) \Box (l_i \circ M_i)\big)(\bx) = (g_i\circ K_i)(\bx-\by_i) + (l_i \circ M_i)(\by_i).$$
Hence, $(\bx, \f \by) = (\bx,\by_1,\ldots,\by_m)$ is an optimal solution to the convex optimization problem
\begin{align}
	\label{ic_p_primal2_convex}
	\inf_{(x,\f y)\in \h\oplus \f \g}\bigg\{ f(x) + h(x) -\<x,z\> + \sum_{i=1}^m \Big[ g_i(K_i(L_ix-r_i-y_i)) + l_i(M_iy_i) \Big] \bigg\}
\end{align}
By denoting
\begin{align}
	\label{ic_notations_sri_convex}
	\begin{aligned}
	\f f: \h \oplus \f \g &\rightarrow \oR,\ \f f(x,\f y) = f(x) + h(x) - \<x,z\> \\
	\f g:\f \X \oplus \f \Y &\rightarrow \oR,\ \f g(\f x, \f y)= \sum_{i=1}^m \Big[ g_i(x_i-K_ir_i) + l_i(y_i) \Big] \\
	\f L: \h \oplus \f \g &\rightarrow \f \X \oplus \f \Y,\ (x, \f y) \mapsto \bigtimes_{i=1}^m\Big\{ K_i(L_ix-y_i) \Big\} \times \bigtimes_{i=1}^m \Big\{ M_i y_i \Big\}, \\
   \end{aligned}
\end{align}

problem \eqref{ic_p_primal2_convex} can be equivalently written as
\begin{align}
	\label{ic_p_primal3_convex}
	\inf_{(x,\f y)\in\h\oplus \f \g}\left\{ \f f(x,\f y) + \f g(\f L(x,\f y)) \right\}.
\end{align}
Thus,
$$0 \in \partial(\f f + \f g\circ \f L)(\bx, \f \by).$$
Since $\f E = \f L(\dom \f f) - \dom \f g$ and \eqref{ic_E_convex} is fulfilled, it holds (see, for instance, \cite{Bot10, BGW09, BauCom11})
$$0 \in \partial \big( \f f + \f g \circ \f L \big)(\bx, \f \by) = \partial \f f(\bx, \f \by) + \big(\f L^* \circ \partial \f g \circ \f L\big)(\bx, \f \by),$$ 
where
$$\f L^* : \f \X \oplus \f \Y \rightarrow \h \oplus \f \g,\ (\f p, \f q) \mapsto  \Big( \sum_{i=1}^m L_i^*K_i^*p_i, -K_1^*p_1 + M_1^*q_1,\ldots, -K_m^*p_m + M_m^*q_m \Big).$$
We obtain
{\allowdisplaybreaks
\begin{align*}
	& 0 \in \partial \f f(\bx,\f\by) + \big( \f L^* \circ \partial \f g \circ \f L \big)(\bx,\f \by) \\
	 \Leftrightarrow & \ \left\{
	\begin{array}{l}
	0 \in \partial f(\bx) + \nabla h(\bx) -z + \sum_{i=1}^m L_i^*\big(K_i^*\circ \partial g_i \circ K_i \big)(L_i \bx -r_i - \by_i)\\
	0 \in -\big(K_i^*\circ \partial g_i \circ K_i\big)(L_i \bx -r_i - \by_i) + \big(M_i^* \circ \partial l_i \circ M_i \big) \by_i,\ i=1,\ldots,m
	\end{array}\right. \\
	\Leftrightarrow & \ \exists \f v \in \f \g: \left\{
	\begin{array}{l}
	0 \in \partial f(\bx) + \nabla h(\bx) -z + \sum_{i=1}^m L_i^* v_i \\
	v_i \in \big( K_i^* \circ \partial g_i \circ K_i \big)(L_i \bx -r_i - \by_i),\ i=1,\ldots,m \\
	v_i \in \big( M_i^* \circ \partial l_i \circ M_i \big)\by_i,\ i=1,\ldots,m
	\end{array}\right. \\
	\Leftrightarrow & \ \exists \f v \in \f \g: \left\{
	\begin{array}{l}
	0 \in \partial f(\bx) + \nabla h(\bx) -z + \sum_{i=1}^m L_i^* v_i \\
	L_i \bx -r_i - \by_i \in \big(K_i^*\circ \partial g_i \circ K_i\big)^{-1} v_i,\ i=1,\ldots,m \\
	\by_i \in \big(M_i^* \circ \partial l_i \circ M_i\big)^{-1} v_i,\ i=1,\ldots,m
	\end{array}\right.	\\
	\Leftrightarrow & \ \exists \f v \in \f \g: \left\{
	\begin{array}{l}
	0 \in \partial f(\bx) + \nabla h(\bx) -z + \sum_{i=1}^m L_i^* v_i \\
	v_i \in \Big( \big(K_i^* \circ \partial g_i \circ K_i \big) \Box \big(M_i^* \circ \partial l_i \circ M_i\big) \Big) (L_i \bx - r_i),\ i=1,\ldots,m
	\end{array}\right.	\\
	\Leftrightarrow  & \ z \in \partial f(\bx) + \sum_{i=1}^m L_i^* \Big( \big(K_i^* \circ \partial g_i \circ K_i \big) \Box \big(M_i^* \circ \partial l_i \circ M_i\big) \Big) (L_i \bx - r_i) + \nabla h(\bx),
\end{align*}}
which completes the proof.
\end{proof}

\begin{remark}\label{ic_finalremark_convex}
If one of the following two conditions 
\begin{enumerate}
\setlength{\itemsep}{-2pt}
		\item[$\bullet$] $f$ is real-valued and the operators $L_i$, $K_i$ and $M_i$ are surjective for any $i =1,\ldots,m$;
		\item[$\bullet$] the functions $g_i$ and $l_i$ are real-valued for any $i=1,\ldots,m$;		
\end{enumerate}	
is fulfilled, then $\f E = \f \X \oplus \f \Y$ and \eqref{ic_E_convex} is obviously true. \\
On the other hand, if $\h$, $\g_i, \X_i$ and $\Y_i$, $i=1,\ldots,m$ are finite dimensional and
\begin{align*}
			\mbox{for any} \ i =1,\ldots,m \ \mbox{exists} \ y_i \in \g_i: \left\{
             \begin{array}{l} 
            K_iy_i \in K_i(L_i(\ri \dom f) - r_i) - \ri \dom g_i,\\ 
            M_iy_i \in \ri \dom l_i
            \end{array}\right.,
\end{align*}
then \eqref{ic_E_convex} is also true. This follows by using that in finite dimensional spaces the strong quasi-relative interior of a convex set is nothing else than its relative interior and by  taking into account the properties of the latter.
\end{remark}

\subsection{An algorithm of forward-backward type}\label{ic_subsecFB_convex}
When applied to \eqref{ic_condition_convex}, the iterative scheme introduced in \eqref{ic_A1} and the corresponding convergence statements read as follows.
\begin{algorithm}\label{ic_alg1_convex} \text{ }\newline
Let $x_0 \in \h$, and for any $i=1,\ldots,m$, let $p_{i,0}\in\X_i$, $q_{i,0} \in \Y_i$ and $y_{i,0},\ z_{i,0},\ v_{i,0} \in \g_i$. For any $i=1,\ldots,m$, let $\tau,\,\theta_{1,i},\, \theta_{2,i},\, \gamma_{1,i},\, \gamma_{2,i}$ and $\sigma_i$ be strictly positive real numbers such that
\begin{align}
	\label{ic_condition_step_length_convex}
	2\mu^{-1}\left(1-\overline{\alpha} \right) \min_{i=1,\ldots,m}\bigg\{\frac{1}{\tau},\frac{1}{\theta_{1,i}},\frac{1}{\theta_{2,i}}, \frac{1}{\gamma_{1,i}},\frac{1}{\gamma_{2,i}},\frac{1}{\sigma_i} \bigg\}>1,
\end{align}
for
\begin{align*}
\overline{\alpha} = \max\left\{\sqrt{\tau \sum_{i=1}^m \sigma_i \|L_i\|^2}, \max_{j=1,\ldots,m}\left\{\sqrt{\theta_{1,j}\gamma_{1,j}\|K_j\|^2}, \sqrt{\theta_{2,j}\gamma_{2,j}\|M_j\|^2}\right\} \right\}.
\end{align*}
Furthermore, let $\varepsilon \in (0,1)$, $(\lambda_n)_{n\geq 0}$ a sequence in $\left[\varepsilon,1\right]$ and set
{\allowdisplaybreaks	
\begin{align}\label{A1_convex}
	  \left(\forall n\geq 0\right) \begin{array}{l} \left\lfloor \begin{array}{l}
		\wt x_{n} \approx \Prox_{\tau f}\left( x_n - \tau\left(Cx_n + \sum_{i=1}^m L_i^* v_{i,n} - z\right) \right) \\
		\text{For }i=1,\ldots,m  \\
				\left\lfloor \begin{array}{l}
					\wt p_{i,n} \approx \Prox_{\theta_{1,i} g_i^{*}}\left(p_{i,n} +\theta_{1,i} K_i z_{i,n}  \right) \\
					\wt q_{i,n} \approx \Prox_{\theta_{2,i} l_i^{*}}\left(q_{i,n} +\theta_{2,i} M_i y_{i,n}  \right) \\
					u_{1,i,n} \approx z_{i,n} +\gamma_{1,i}\left(K_i^*\left(p_{i,n}-2\wt p_{i,n}\right)+v_{i,n}+\sigma_i\left(L_i(2\wt x_{n}-x_n)-r_i\right)\right) \\
					u_{2,i,n} \approx y_{i,n} +\gamma_{2,i}\left(M_i^*\left(q_{i,n}-2\wt q_{i,n}\right)+v_{i,n}+\sigma_i\left(L_i(2\wt x_{n}-x_n)-r_i\right)\right) \\
					\wt z_{i,n} \approx \frac{1+\sigma_i\gamma_{2,i}}{1+\sigma_i(\gamma_{1,i}+\gamma_{2,i})}\left(u_{1,i,n}-\frac{\sigma_i\gamma_{1,i}}{1+\sigma_i\gamma_{2,i}} u_{2,i,n}\right) \\
					\wt y_{i,n} \approx \frac{1}{1+\sigma_i\gamma_{2,i}}\left(u_{2,i,n} -\sigma_i\gamma_{2,i}\wt z_{i,n}\right) \\
					\wt v_{i,n} \approx v_{i,n} + \sigma_i\left(L_i(2\wt x_{n}-x_n)-r_i- \wt z_{i,n} - \wt y_{i,n}\right)
				\end{array} \right.\\
		x_{n+1} = x_n + \lambda_n (\wt x_{n}-x_n ) \\
		\text{For }i=1,\ldots,m  \\
				\left\lfloor \begin{array}{l}
					p_{i,n+1} = p_{i,n} + \lambda_n(\wt p_{i,n}-p_{i,n})\\
					q_{i,n+1} = q_{i,n} + \lambda_n(\wt q_{i,n}-q_{i,n})\\
					z_{i,n+1} = z_{i,n} + \lambda_n(\wt z_{i,n}-z_{i,n})\\
					y_{i,n+1} = y_{i,n} + \lambda_n(\wt y_{i,n}-y_{i,n})\\
					v_{i,n+1} = v_{i,n} + \lambda_n(\wt v_{i,n}-v_{i,n}).
				\end{array} \right. \\ \vspace{-4mm}
		\end{array}
		\right.
		\end{array}
	\end{align}}
\end{algorithm}

\begin{theorem}\label{ic_th1_convex}
For Problem \ref{ic_problem_convex}, suppose that
\begin{align}\label{ic_th1_inclusion_convex}
	z \in \ran \bigg( \partial f +  \sum_{i=1}^m L_i^*\left((K_i^*\circ \partial g_i \circ K_i )\Box (M_i^*\circ \partial l_i \circ M_i)\right)(L_i \cdot - r_i) +\nabla h \bigg)
\end{align}
and consider the sequences generated by Algorithm \ref{ic_alg1_convex}. Then there exists an optimal solution $\bx$ to \eqref{ic_p_primal_convex} and optimal solution $(\bp,\bq)$ to \eqref{ic_p_dual_convex} such that
\begin{enumerate}[label={(\roman*)}]
	\setlength{\itemsep}{-2pt}
		\item  \label{ic_th1.01_convex} $x_n \rightharpoonup \bx$, $p_{i,n} \rightharpoonup \bp_{i}$ and $q_{i,n} \rightharpoonup \bq_{i}$ for any $i=1,\ldots,m$ as $n \rightarrow +\infty$.
    \item  \label{ic_th1.02_convex} if $h$ is uniformly convex at $\bx$, then $x_n \rightarrow \bx$ as $n \rightarrow +\infty$.
	\end{enumerate}
\end{theorem}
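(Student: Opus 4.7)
The plan is to reduce Theorem \ref{ic_th1_convex} directly to Theorem \ref{ic_th1} via subdifferential calculus, so the bulk of the argument is the identification of the convex optimization ingredients with the monotone inclusion ingredients in Problem \ref{ic_problem}.

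First I would set $A := \partial f$, $B_i := \partial g_i$, $D_i := \partial l_i$ for every $i=1,\ldots,m$, and $C := \nabla h$. Since $f,g_i,l_i \in \Gamma$, the operators $\partial f$, $\partial g_i$ and $\partial l_i$ are maximally monotone (cf.\ \cite{Roc70}), and by the Baillon--Haddad theorem (cf.\ \cite[Corollary 18.16]{BauCom11}) the $\mu$-Lipschitzian gradient $\nabla h$ is $\mu^{-1}$-cocoercive. Consequently, Problem \ref{ic_problem_convex} fits the framework of Problem \ref{ic_problem}, and the hypothesis \eqref{ic_th1_inclusion_convex} is exactly \eqref{ic_th1_inclusion}. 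Moreover, since $J_{\gamma\partial f} = \Prox\nolimits_{\gamma f}$ and, via \eqref{ic_res-identity}, $J_{\gamma(\partial g_i)^{-1}} = J_{\gamma\partial g_i^{*}} = \Prox\nolimits_{\gamma g_i^{*}}$, the iterative scheme \eqref{A1_convex} coincides with \eqref{ic_A1} for these choices of $A$, $B_i$, $D_i$ and $C$.

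Second, I would invoke Theorem \ref{ic_th1} to obtain a primal-dual solution $(\bx,\f\bp,\f\bq,\f\by) \in \h \oplus \f\X \oplus \f\Y \oplus \f\g$ of Problem \ref{ic_problem} in the sense of \eqref{ic_condition} such that $x_n \rightharpoonup \bx$, $p_{i,n} \rightharpoonup \bp_i$, $q_{i,n} \rightharpoonup \bq_i$ and $y_{i,n} \rightharpoonup \by_i$ for every $i=1,\ldots,m$. The inclusions in \eqref{ic_condition} are then precisely the inclusions in \eqref{ic_condition_convex}, using that $B_i^{-1} = (\partial g_i)^{-1} = \partial g_i^{*}$ and analogously for $l_i^{*}$. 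The discussion following \eqref{ic_condition_convex}, combined with the weak duality inequality \eqref{ic_wd}, then guarantees that $\bx$ is optimal for \eqref{ic_p_primal_convex} and $(\f\bp,\f\bq)$ is optimal for \eqref{ic_p_dual_convex}, with strong duality holding and zero duality gap. This proves part \ref{ic_th1.01_convex}.

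For part \ref{ic_th1.02_convex}, the key observation is that uniform convexity of $h$ at $\bx$ implies uniform monotonicity of $\nabla h = C$ at $\bx$ with a corresponding modulus (this is a standard consequence of the subdifferential inequality for uniformly convex functions; see \cite[Section 14]{BauCom11}). Theorem \ref{ic_th1}\ref{ic_th1.02} then applies directly and yields $x_n \rightarrow \bx$. I expect no serious obstacle: the only subtle point is keeping the bookkeeping between the monotone inclusion formulation and the optimization formulation clean, in particular the inversion $(\partial g_i)^{-1} = \partial g_i^{*}$ used in the resolvent step and the identification of the uniform monotonicity modulus of $\nabla h$ from the uniform convexity modulus of $h$.
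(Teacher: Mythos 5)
Your proposal is correct and follows essentially the same route as the paper: identify $A=\partial f$, $B_i=\partial g_i$, $D_i=\partial l_i$, $C=\nabla h$, use maximal monotonicity of subdifferentials, the Baillon--Haddad theorem for cocoercivity, the identities $B_i^{-1}=\partial g_i^{*}$ and $J_{\gamma\partial f}=\Prox_{\gamma f}$ to match the schemes, and the fact that uniform convexity of $h$ at $\bx$ yields uniform monotonicity of $\nabla h$ at $\bx$, then invoke Theorem \ref{ic_th1}. The only cosmetic difference is the reference cited for the last implication (the paper uses \cite[Section 3.4]{Zal02}).
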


\begin{proof}
The results is a direct consequence of Theorem \ref{ic_th1} when taking
	\begin{align}
			\label{ic_operators_convex}
			A=\partial f,\ C = \nabla h, \text{ and } \ B_i=\partial g_i,\ D_i =\partial l_i,\ i=1,\ldots,m.
	\end{align}
We also notice that, according to Theorem 20.40 in \cite{BauCom11}, the operators in \eqref{ic_operators_convex} are maximally monotone, while, by \cite[Corollary 16.24]{BauCom11}, we have $A^{-1}=\partial f^*$, $C^{-1}=\partial h^*$, $B_i^{-1}=\partial g_i^*$ and $D_i^{-1}=\partial l_i^*$ for $i=1,\ldots,m$. Furthermore, by \cite[Corollary 18.16]{BauCom11}, $C=\nabla h$ is $\mu^{-1}$-cocoercive, while, if $h$ is uniformly convex at $\bx \in \h$, then $C=\nabla h$ is uniformly monotone at $\bx$ (cf. \cite[Section 3.4]{Zal02}).
\end{proof}

\begin{remark}
	\label{ic_remark_C0_convex}
If $h \in \Gamma(\h)$ such that $\nabla h(x)=0$ for all $x\in\h$, then condition \eqref{ic_condition_step_length_convex} simplifies to
	\begin{align*}
		\max\bigg\{\tau \sum_{i=1}^m \sigma_i \|L_i\|^2, \max_{j\in\I}\left\{\theta_{1,j}\gamma_{1,j}\|K_j\|^2, \theta_{2,j}\gamma_{2,j}\|M_j\|^2\right\} \bigg\}<1.
	\end{align*}
In this situation Algorithm \ref{ic_alg1_convex} converges under the relaxed assumption that $(\lambda_n)_{n\geq 0} \subseteq \left[\varepsilon,2-\varepsilon\right]$ for $\varepsilon \in (0,1)$ (see also Remark \ref{ic_remark_C0}).
\end{remark}

\subsection{An algorithm of forward-backward-forward type}\label{ic_subsecFBF_convex}
On the other hand, when applied to \eqref{ic_condition_convex}, the iterative scheme introduced in \eqref{ic_A1_2} and the corresponding convergence statements read as follows.
\begin{algorithm}\label{ic_alg2_convex} \text{ }\newline
Let $x_0 \in \h$, and for any $i=1,\ldots,m$, let $p_{i,0}\in\X_i$, $q_{i,0} \in \Y_i$, and $z_{i,0},\ y_{i,0},\ v_{i,0} \in \g_i$. Set
\begin{align}
	\label{ic_condition_beta_convex}
	\beta = \mu + \sqrt{\max \bigg\{ \sum_{i=1}^m\|L_i\|^2, \max_{j=1,...,m}\big\{ \|K_j\|^2, \|M_j\|^2 \big\}  \bigg\}},
\end{align}
 let $\varepsilon \in \left (0,\frac{1}{\beta+1}\right)$, $(\gamma_n)_{n\geq 0}$ a sequence in $\left[\varepsilon,\frac{1-\varepsilon}{\beta}\right]$ and set
	\begin{align}\label{ic_A1_2_convex}
	  \left(\forall n\geq 0\right) \begin{array}{l} \left\lfloor \begin{array}{l}
		\wt x_n \approx \Prox_{\gamma_n f}\left( x_n - \gamma_n\left(Cx_n + \sum_{i=1}^m L_i^* v_{i,n} - z\right) \right) \\
		\text{For }i=1,\ldots,m  \\
				\left\lfloor \begin{array}{l}
					\wt p_{i,n} \approx \Prox_{\gamma_n g_i^*}\left(p_{i,n} +\gamma_n K_i z_{i,n}  \right) \\
					\wt q_{i,n} \approx \Prox_{\gamma_n l_i^*}\left(q_{i,n} +\gamma_n M_i y_{i,n}  \right) \\
					u_{1,i,n} \approx z_{i,n} -\gamma_n\left(K_i^*p_{i,n}-v_{i,n}-\gamma_n\left(L_ix_n-r_i\right)\right) \\
					u_{2,i,n} \approx y_{i,n} -\gamma_n\left(M_i^*q_{i,n}-v_{i,n}-\gamma_n\left(L_ix_n-r_i\right)\right) \\
					\wt z_{i,n} \approx \frac{1+\gamma_n^2}{1+2\gamma_n^2}\left(u_{1,i,n}-\frac{\gamma_n^2}{1+\gamma_n^2} u_{2,i,n}\right) \\
					\wt y_{i,n} \approx \frac{1}{1+\gamma_n^2}\left(u_{2,i,n} -\gamma_n^2 \wt z_{i,n}\right) \\
					\wt v_{i,n} \approx v_{i,n} + \gamma_n\left(L_ix_n-r_i - \wt z_{i,n}- \wt y_{i,n}\right) 
				\end{array} \right.\\
		x_{n+1} \approx \wt x_n + \gamma_n(Cx_n-C \wt x_n + \sum_{i=1}^m L_i^*(v_{i,n}-\wt v_{i,n})) \\
		\text{For }i=1,\ldots,m  \\
				\left\lfloor \begin{array}{l}
					p_{i,n+1} \approx \wt p_{i,n} - \gamma_n(K_i(z_{i,n}-\wt z_{i,n}))\\
					q_{i,n+1} \approx \wt q_{i,n} - \gamma_n(M_i(y_{i,n}-\wt y_{i,n}))\\
					z_{i,n+1} \approx \wt z_{i,n} + \gamma_n(K_i^*(p_{i,n}-\wt p_{i,n}))\\
					y_{i,n+1} \approx \wt y_{i,n} + \gamma_n(M_i^*(q_{i,n}-\wt q_{i,n}))\\
					v_{i,n+1} \approx \wt v_{i,n} - \gamma_n(L_i(x_n- \wt x_n)).
				\end{array} \right. \\ \vspace{-4mm}
		\end{array}
		\right.
		\end{array}
	\end{align}
\end{algorithm}

\begin{theorem}\label{ic_th2_convex}
For Problem \ref{ic_problem_convex}, suppose that
\begin{align}\label{ic_th2_inclusion_convex}
	z \in \ran \bigg( \partial f +  \sum_{i=1}^m L_i^*\Big(\big(K_i^*\circ \partial g_i \circ K_i \big)\Box \big(M_i^*\circ \partial l_i\circ M_i\big)\Big)(L_i \cdot -r_i) +\nabla h \bigg),
\end{align}
and consider the sequences generated by Algorithm \ref{ic_alg2_convex}. Then there exists an optimal solution $\bx$ to \eqref{ic_p_primal_convex} and optimal solution $(\bp,\bq)$ to \eqref{ic_p_dual_convex} such that
\begin{enumerate}[label={(\roman*)}]
	\setlength{\itemsep}{-2pt}
		\item  \label{ic_th2.01_convex} $\sum_{n\geq 0}\|x_n- \wt x_n\|^2 < +\infty$ and for any $i=1,...,n$
		\begin{align*}
			\sum_{n\geq 0} \|p_{i,n} - \wt p_{i,n}\|^2 < +\infty \text{ and } \sum_{n\geq 0} \|q_{i,n} - \wt q_{i,n}\|^2 < +\infty.
		\end{align*}
		\item  \label{ic_th2.02_convex} $x_n \rightharpoonup \bx$, $\wt x_n \rightharpoonup \bx$ and for any $i=1,...,n$
		\begin{align*}
			 \left\{\begin{array}{l} p_{i,n}\rightharpoonup \bp_{i,n} \\ \wt p_{i,n}\rightharpoonup \bp_{i,n} \end{array}\right. \text{ and }
		   \left\{\begin{array}{l} q_{i,n}\rightharpoonup \bq_{i,n} \\ \wt q_{i,n}\rightharpoonup \bq_{i,n} \end{array}\right.\!\!.
		\end{align*}
	\end{enumerate}
\end{theorem}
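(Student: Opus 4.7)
The plan is to derive Theorem \ref{ic_th2_convex} as a direct specialization of Theorem \ref{ic_th2} in the same spirit as the proof of Theorem \ref{ic_th1_convex}. The key observation is that the convex minimization Problem \ref{ic_problem_convex} reduces to the monotone inclusion Problem \ref{ic_problem} through the identifications
\begin{align*}
A = \partial f, \quad C = \nabla h, \quad B_i = \partial g_i, \quad D_i = \partial l_i \quad (i=1,\ldots,m).
\end{align*}
First I would justify that all hypotheses of Theorem \ref{ic_th2} are met for these choices. By \cite[Theorem 20.40]{BauCom11}, the subdifferentials $\partial f$, $\partial g_i$, $\partial l_i$ are maximally monotone, and $C = \nabla h$ is $\mu$-Lipschitz continuous by assumption, which matches the relaxed regularity assumption of Theorem \ref{ic_th2} (no cocoercivity is required). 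The range condition \eqref{ic_th2_inclusion} reads exactly as \eqref{ic_th2_inclusion_convex} under the identifications above.

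Next I would confirm that the iteration \eqref{ic_A1_2_convex} really coincides with \eqref{ic_A1_2} under these identifications. Using $J_{\gamma_n \partial f} = \Prox_{\gamma_n f}$ together with \cite[Corollary 16.24]{BauCom11}, which gives $(\partial f)^{-1}=\partial f^*$, $(\partial g_i)^{-1}=\partial g_i^*$ and $(\partial l_i)^{-1}=\partial l_i^*$, the backward steps $J_{\gamma_n A}$, $J_{\gamma_n B_i^{-1}}$ and $J_{\gamma_n D_i^{-1}}$ in Algorithm \ref{ic_alg2} become precisely $\Prox_{\gamma_n f}$, $\Prox_{\gamma_n g_i^*}$, $\Prox_{\gamma_n l_i^*}$ as in Algorithm \ref{ic_alg2_convex}. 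The forward steps involving $C=\nabla h$ and the linear operators $L_i, K_i, M_i$ are already in the required form. Hence Theorem \ref{ic_th2} directly yields the existence of $(\bx,\f\bp,\f\bq,\f\by)$ satisfying the monotone primal-dual system \eqref{ic_condition_convex}, together with the summability conclusion \ref{ic_th2.01_convex} and the weak convergence statements in \ref{ic_th2.02_convex}.

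Finally, to transfer this to the optimization setting, I would invoke the discussion preceding Proposition \ref{ic_th_ran_convex}: if $(\bx,\f\bp,\f\bq,\f\by)$ is a primal-dual solution of \eqref{ic_condition_convex}, then $\bx$ is optimal for \eqref{ic_p_primal_convex} and $(\f\bp,\f\bq)$ is optimal for \eqref{ic_p_dual_convex}. Combined with the weak limits produced by Theorem \ref{ic_th2}, this immediately gives assertions \ref{ic_th1.01_convex}, uh I mean \ref{ic_th2.02_convex}, namely $x_n \rightharpoonup \bx$, $\wt x_n \rightharpoonup \bx$, $p_{i,n}, \wt p_{i,n} \rightharpoonup \bp_i$ and $q_{i,n}, \wt q_{i,n} \rightharpoonup \bq_i$ for all $i$. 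The summability statement \ref{ic_th2.01_convex} is a direct restatement of \ref{ic_th2.01} under the same identifications.

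I do not expect any real obstacle in this proof: the entire argument is a bookkeeping translation from the monotone operator framework to the convex analysis framework, using maximal monotonicity of subdifferentials, the proximal characterization of the resolvent, and the inversion formula for the subdifferential. The only subtlety worth stating explicitly is that the convergence of $(\wt x_n)$, $(\wt p_{i,n})$ and $(\wt q_{i,n})$ to the same weak limits as $(x_n)$, $(p_{i,n})$ and $(q_{i,n})$ follows either directly from \cite[Theorem 2.5(ii)]{BriCom11} as used in Theorem \ref{ic_th2}, or equivalently from the strong summability in \ref{ic_th2.01_convex}.
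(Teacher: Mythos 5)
Your proposal is correct and follows essentially the same route as the paper: the paper's own proof is a one-line reduction that reuses the identifications $A=\partial f$, $C=\nabla h$, $B_i=\partial g_i$, $D_i=\partial l_i$ from the proof of Theorem \ref{ic_th1_convex} (maximal monotonicity of subdifferentials, $(\partial g_i)^{-1}=\partial g_i^*$, $J_{\gamma\partial f}=\Prox_{\gamma f}$) and then applies Theorem \ref{ic_th2}, together with the discussion preceding Proposition \ref{ic_th_ran_convex} to pass from the primal-dual inclusion system to optimality. Your write-up merely spells out these bookkeeping steps in more detail, correctly noting that only $\mu$-Lipschitz continuity of $\nabla h$ (not cocoercivity) is needed here.
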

\begin{proof}
The conclusions follow by using the statements in the proof of Theorem \ref{ic_th1_convex} and by applying Theorem \ref{ic_th2}. 
\end{proof}

\section{Numerical experiments}\label{ic_secExperiment}

Within this section we solve image denoising problems where first- and second-order total variation functionals are linked via infimal convolutions. This approach  was initially proposed in \cite{ChaLio97} and further 
investigated in \cite{SetSteTeu11}. 

Let $b\in\R^n$ be the observed and vectorized noisy image of size $M \times N$ (with $n=MN$ for greyscale and $n=3MN$ for colored images). 
For $k \in \N$ and $\omega=(\omega_1,\ldots,\omega_k)\in \R_{++}^k$ we consider on $\R^{k \times n}$ the following norm defined for $y=(y_1,\ldots,y_k)^T \in \R^{k \times n}$ 
as
\begin{align*}
	\|y\|_{1,\omega} = \Big\|\big(\omega_1 y_1^2 + \ldots + \omega_k y_k^2 \big)^{\frac{1}{2}}\Big\|_1,
\end{align*}
where addition, multiplication and square root of vectors are understood to be componentwise. Further, we consider the forward difference matrix 
{\scriptsize
\begin{align*}
		D_k:=\left[\begin{array}{rrrrrr}
		-1 & 1 & 0 & 0 & \cdots & 0 \\
		0 & -1 & 1 & 0 & \cdots & 0 \\
		\vdots & & \ddots & \ddots & & \vdots \\
		0 & \cdots & 0 & -1 & 1 & 0\\
		0 & \cdots & 0 & 0 & -1 & \ 1 \\
		0 & \cdots & 0 & 0 & 0 & 0
		\end{array}\right] \in \R^{k\times k},
\end{align*}}

\noindent which models the discrete first-order derivative. Note that $-D_k^TD_k$ is then an approximation of the second-order derivative. We denote by $A \otimes B$ the Kronecker product of the matrices $A$ and $B$ and 
define
\begin{align}
	\label{ic_example_Kronecker_0}
	D_x = \id_N \otimes D_M, \ D_y = D_N \otimes \id_M \ \mbox{and} \
	\D_1=\left[\begin{array}{r} D_x \\ D_y \end{array}\right], 
\end{align}
where $D_x$ and $D_y$ represent the vertical and horizontal difference operators, respectively. Further, we define the discrete second-order derivatives matrices
\begin{align}
	\label{ic_example_Kronecker}
	 D_{xx} = \id_N \otimes (-D_M^TD_M), \ D_{yy} = (-D_N^TD_N) \otimes \id_M, \ 
	\D_2=\left[\begin{array}{r} D_{xx} \\ D_{yy} \end{array}\right]
\end{align}
and
\begin{align*}
	L_1=\left[\begin{array}{cc} -D_x^T & 0 \\ 0 & -D_y^T \end{array}\right]
\end{align*}
and notice that $\D_2=L_1 \D_1$. For other discrete second-order derivates involving also mixed partial derivates (in horizontal-vertical direction and vice versa) we refer to \cite{SetSteTeu11}.

The two different convex optimization problems we considered for our numerical experiments were taken from \cite[Example 2.2 and Example 3.1]{SetSteTeu11} and readed
\begin{align}
	\label{ic_problem_IC}
	(\ell_2^2\text{-IC/P}) \quad \inf_{x\in \R^n}\left\{\frac{1}{2}\|x-b\|^2 + \Big((\alpha_1\|\cdot\|_{1,\omega_1} \circ \D_1) \Box (\alpha_2\|\cdot\|_{1,\omega_2} \circ \D_2)\Big)(x)\right\},
\end{align}
and
\begin{align}
	\label{ic_problem_MIC}
	(\ell_2^2\text{-MIC/P}) \quad \inf_{x\in \R^n}\left\{\frac{1}{2}\|x-b\|^2 + \Big((\alpha_1\|\cdot\|_{1,\omega_1}) \Box (\alpha_2\|\cdot\|_{1,\omega_2} \circ L_1)\Big)(\D_1x)\right\},
\end{align}
respectively, where $\alpha_1, \alpha_2 \in \R_{++}$ are the regularization parameters and the regularizers correspond to anistropic total variation functionals. One can notice that in both settings a condition of type 
\eqref{ic_sri_convex} is fulfilled, thus the infimal convolutions are proper, convex and lower semicontinuous functions. Due to the fact that the objective functions of the two optimization problems are proper, strongly convex
and lower semicontinuous, they have unique optimal solutions. Finally, in the light of Remark \ref{ic_finalremark_convex}, a condition of type \eqref{ic_E_convex} holds, thus, according to Proposition \ref{ic_th_ran_convex},
the hypotheses of the theorems \ref{ic_th1_convex} and \ref{ic_th2_convex} are for both optimization problems $(\ell_2^2\text{-IC/P})$ and $(\ell_2^2\text{-MIC/P})$ fulfilled.

In order to compare the performances of our two primal-dual iterative schemes with algorithms relying on (augmented) Lagrangian and smoothing techniques, using the definition of the infimal convolution, we formulated
\eqref{ic_problem_IC} and \eqref{ic_problem_MIC} as optimization problems with constraints of the form  
\begin{align}
	\label{ic_problem_IC_2}
	\begin{aligned}
	(\ell_2^2\text{-IC/P}) \quad \inf_{x_1,x_2,z_1,z_2}&\left\{\frac{1}{2}\|x_1+x_2-b\|^2 + \alpha_1\|z_1\|_{1,\omega_1} +\alpha_2\|z_2\|_{1,\omega_2}\right\}, \\
	\text{subject to } &\left(\begin{array}{cc}\D_1 & 0 \\ 0 & \D_2 \end{array}\right) \left(\begin{array}{c} x_1 \\ x_2 \end{array}\right) = \left(\begin{array}{c} z_1 \\ z_2 \end{array}\right)
	\end{aligned}
\end{align}
and
\begin{align}
	\label{ic_problem_MIC_2}
	\begin{aligned}
	(\ell_2^2\text{-MIC/P}) \quad \inf_{x,y_1,y_2,z}&\left\{\frac{1}{2}\|x-b\|^2 + \alpha_1\|y_1\|_{1,\omega_1} +\alpha_2\|z\|_{1,\omega_2}\right\}, \\
	\text{subject to } &\left(\begin{array}{cc}\D_1 & -\id \\ 0 & L_1 \end{array}\right) \left(\begin{array}{c} x \\ y_2 \end{array}\right) = \left(\begin{array}{c} y_1 \\ z \end{array}\right)
	\end{aligned}
\end{align}
respectively.

\begin{figure}[tb]	
	\centering
	\captionsetup[subfigure]{position=top}
	\subfloat[Original image]{\includegraphics*[viewport=144 249 486 574, width=0.32\textwidth]{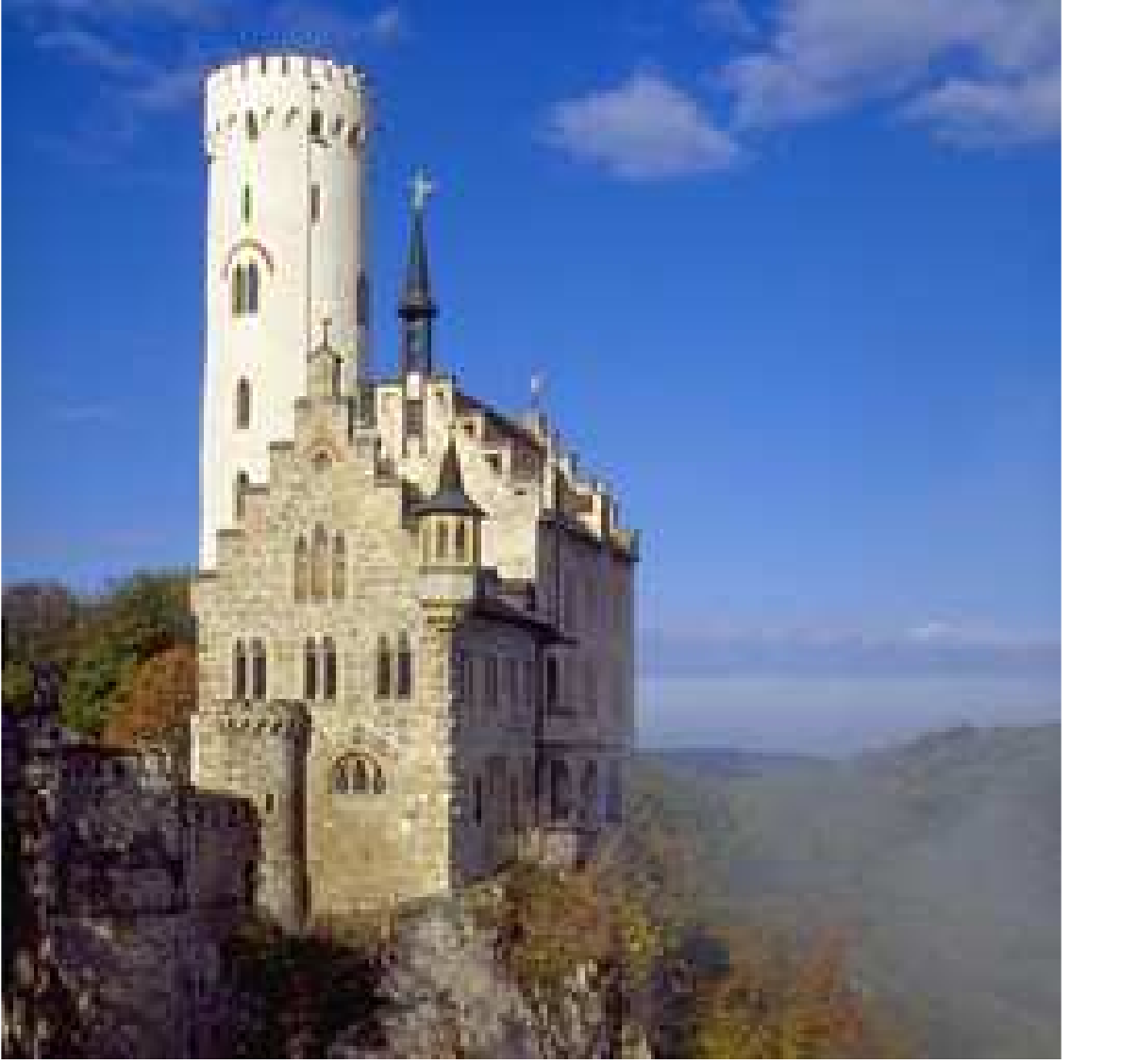}} \hspace{0.2mm}
	\subfloat[Noisy image]{\includegraphics*[viewport=144 249 486 574, width=0.32\textwidth]{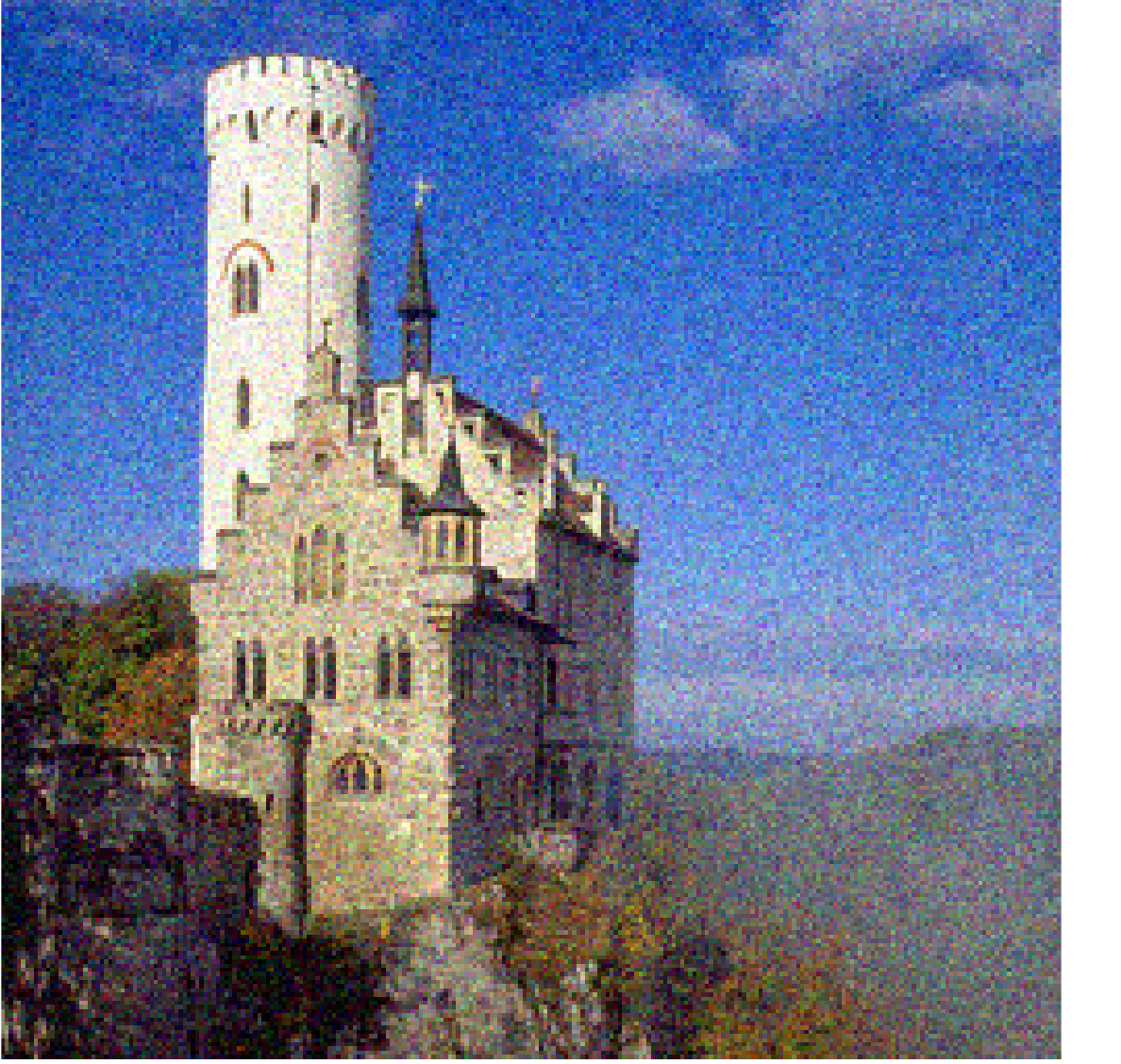}}  \hspace{0.2mm}
	\subfloat[Reconstructed image]{\includegraphics*[viewport=144 249 486 574, width=0.32\textwidth]{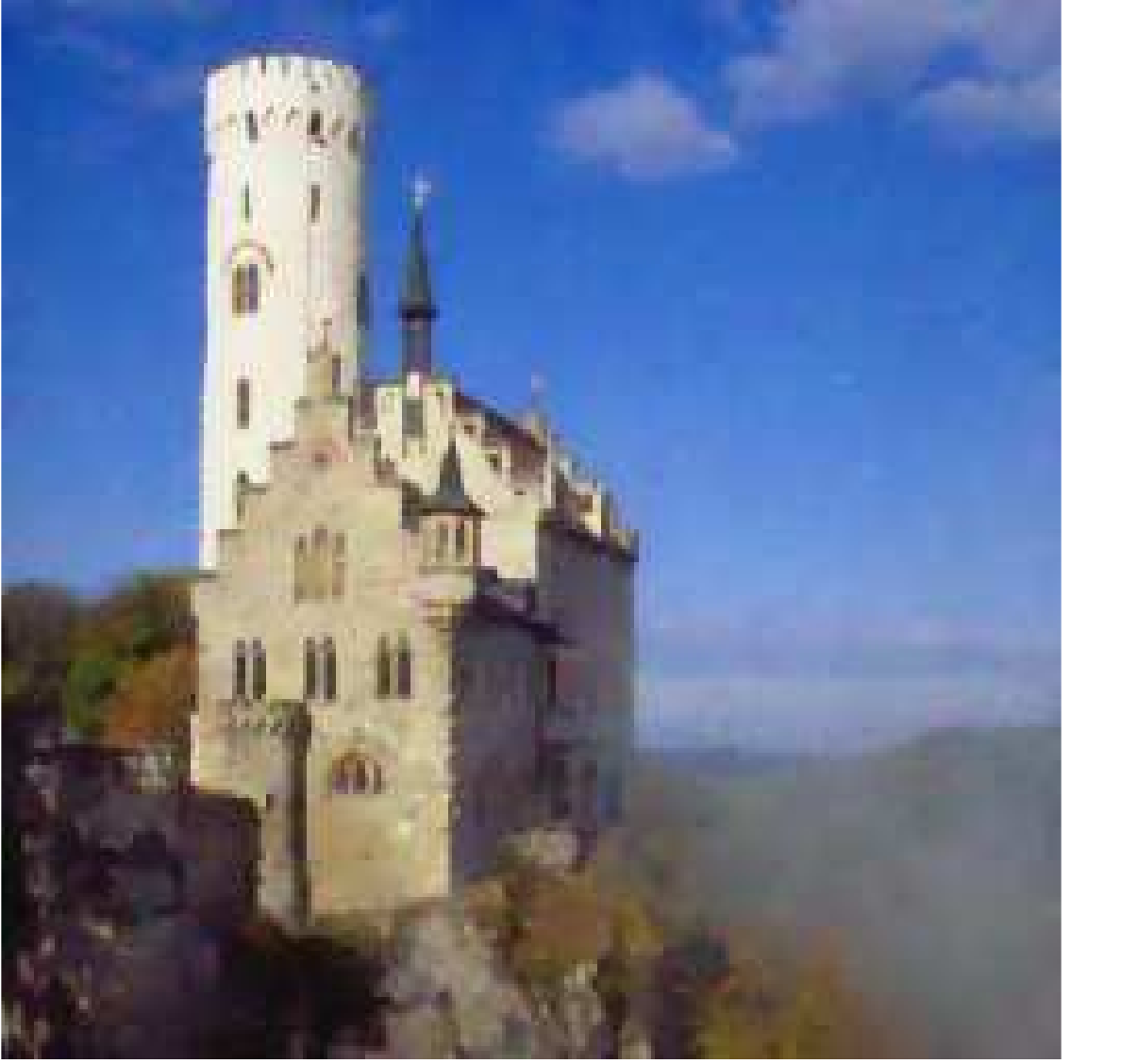}}	
	\caption{\small Figure (a) shows the clean $256\times 256$ lichtenstein test image, (b) shows the image obtained after adding white Gaussian noise with standard deviation $0.08$ and (c) shows the reconstructed image.}
	\label{fig:ic_lichtenstein}	
\end{figure}	

We performed our numerical tests on the colored test image lichtenstein (see Figure \ref{fig:ic_lichtenstein}) of size $256\times 256$ making each color ranging in the closed interval from $0$ to $1$. 
By adding white Gaussian noise with standard deviation $0.08$, we obtained the noisy image $b\in\R^{n}$. We took $\omega_1=(1,1)$ and $\omega_2=(1,1)$, the regularization parameters in 
($\ell_2^2$-IC/P) and ($\ell_2^2$-MIC/P) were set to $\alpha_1=0.06$ and $\alpha_2=0.2$, while the tests were made on an Intel Core i$7$-$3770$ processor.

When measuring the quality of the restored images, we used the improvement in signal-to-noise ratio (ISNR), which is given by 
\begin{align*}
	\text{ISNR}_k = 10 \log_{10}\left( \frac{\|x-b\|^2}{\|x-x_k\|^2}\right),
\end{align*}  
where $x$, $b$, and $x_k$ are the original, the observed noisy and the reconstructed image at iteration $k \in \N$, respectively.

In Figure \ref{fig:ic_ISNR} we compare the performances of Algorithm \ref{ic_alg1_convex} (FB) and Algorithm \ref{ic_alg2_convex} (FBF) in the context of solving the optimization problems \eqref{ic_problem_IC} and \eqref{ic_problem_MIC}
to the ones of different optimization algorithms.
\begin{figure}[tb]	
	\centering
	\captionsetup[subfigure]{position=top}
	\subfloat[ISNR values for ($\ell_2^2$-IC/P)]{\includegraphics*[width=0.5\textwidth]{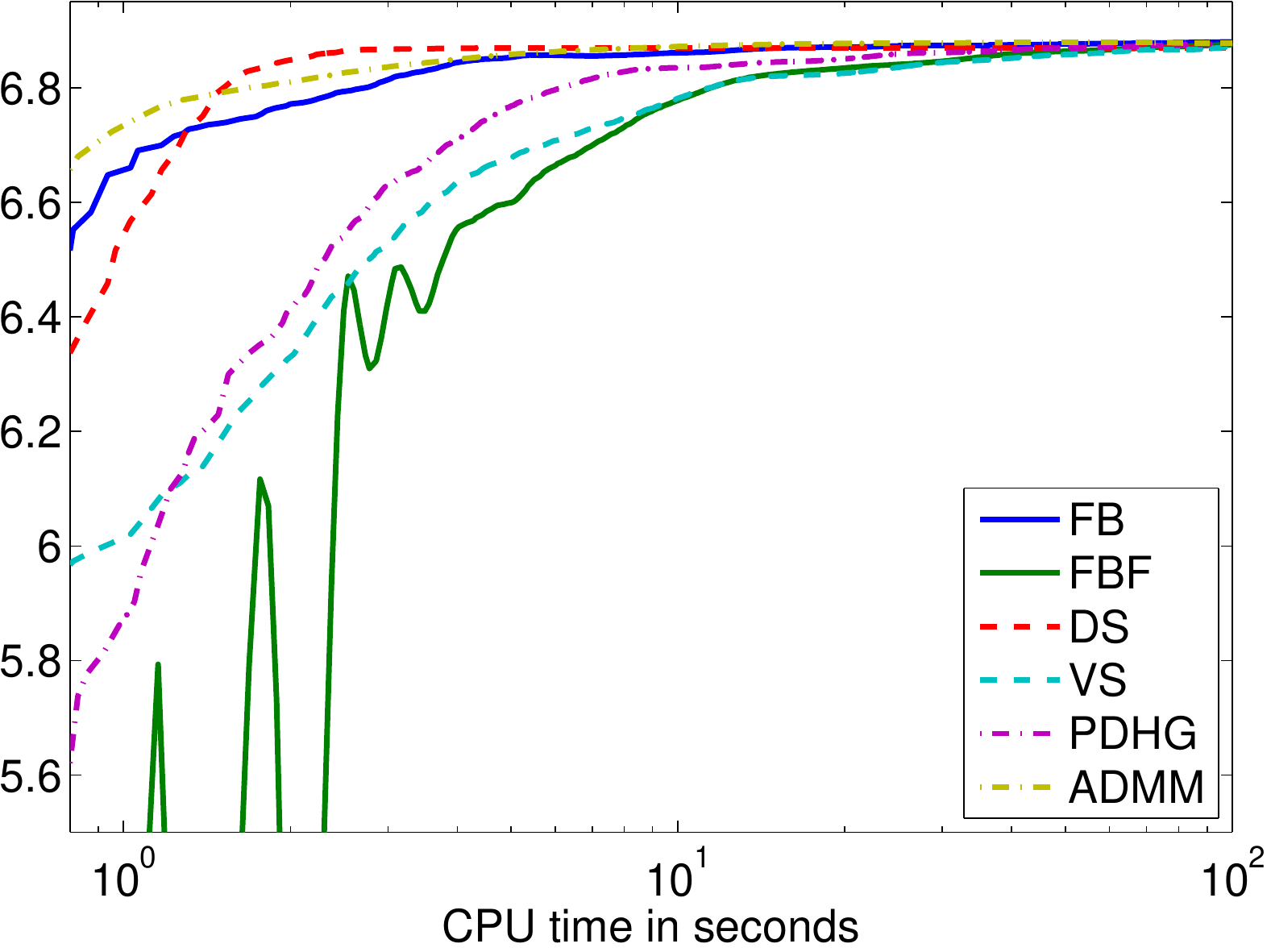}}
	\subfloat[ISNR values for ($\ell_2^2$-MIC/P)]{\includegraphics*[width=0.5\textwidth]{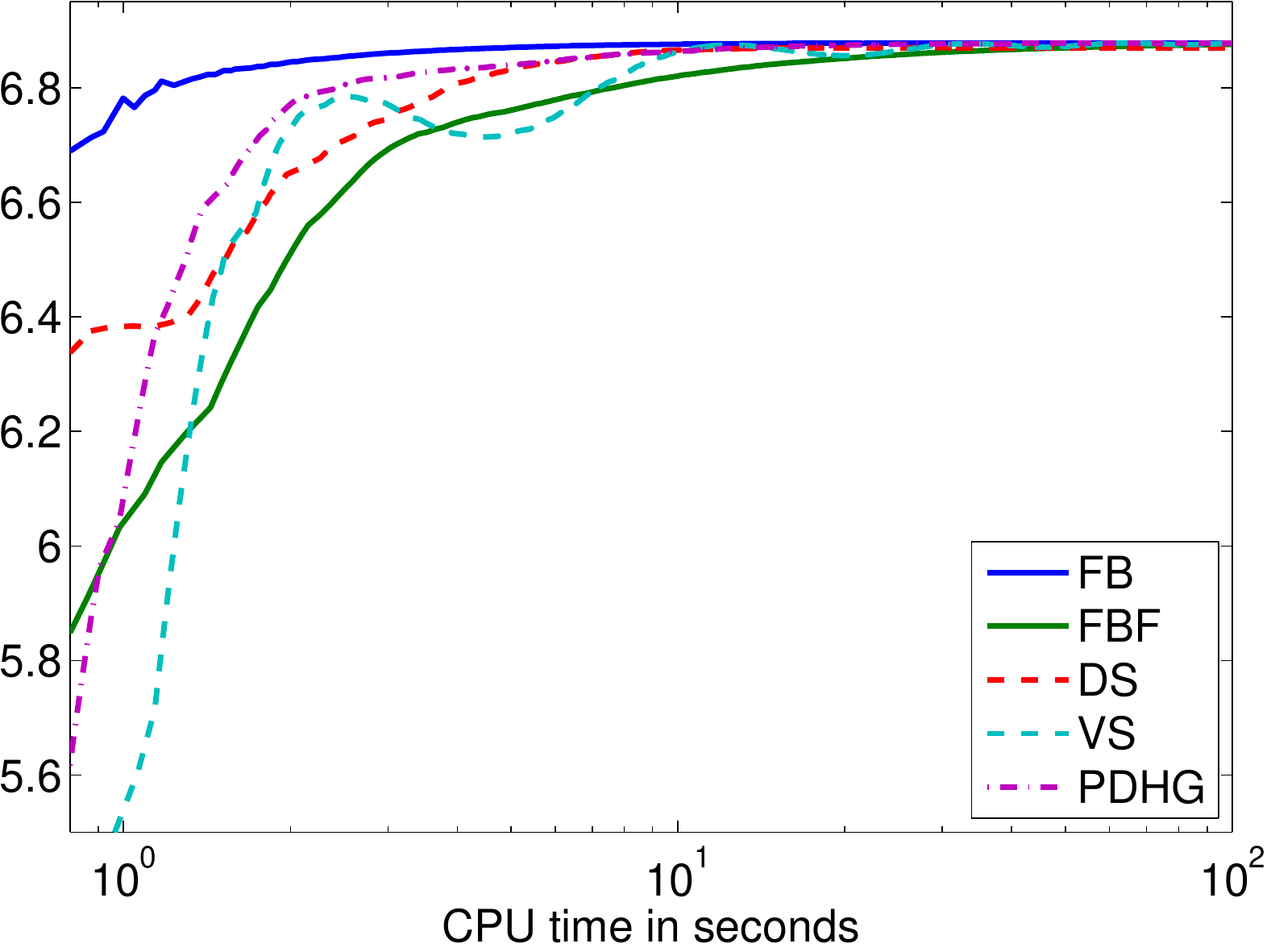}}
	\caption{\small Figure (a) shows the evolution of the ISNR for the ($\ell_2^2$-IC/P) problem w.r.t. the CPU times (in seconds) in log scale. Figure (b) shows the evolution of the ISNR for the ($\ell_2^2$-MIC/P) problem w.r.t. the CPU times (in seconds) in 
	log scale.}
	\label{fig:ic_ISNR}	
\end{figure}	

The double smoothing (DS) algorithm, as proposed in \cite{BotHend13a}, is applied to the Fenchel dual problems of \eqref{ic_problem_IC_2} and \eqref{ic_problem_MIC_2} by considering the acceleration strategies in \cite{BotHend12f}. 
One should notice that, since the smoothing parameters are constant, (DS) solves continuously differentiable approximations of \eqref{ic_problem_IC_2} and \eqref{ic_problem_MIC_2} and does therefore not necessarily 
converge to the unique minimizers of \eqref{ic_problem_IC} and \eqref{ic_problem_MIC}. As a second smoothing algorithm, we considered the variable smoothing technique (VS) in \cite{BotHend12c}, 
which successively reduces the smoothing parameter in each iteration and therefore solves the primal optimization problems as the iteration counter increases. 
We further considered the primal-dual hybrid gradient method (PDHG) as discussed in \cite{SetSteTeu11}, which is nothing else than the primal-dual method in \cite{ChaPoc11}. 
Finally, the alternating direction method of multipliers (ADMM) was applied to \eqref{ic_problem_IC_2}, as it was also done in \cite{SetSteTeu11}. 
Here, one makes use of the Moore-Penrose inverse of a special linear bounded operator which can be implemented in this setting efficiently, since $\D_1^T\D_1$ and $\D_2^T\D_2$ can be diagonalized by the discrete cosine transform. 
The problem which arises in \eqref{ic_problem_MIC_2}, however, is far more difficult to be solved with this method (and was therefore not implemented), since the linear bounded operator assumed to be inverted has a 
more complicated structure. This reveals a typical drawback of ADMM given by the fact that this method does not provide a full splitting, like primal-dual or smoothing algorithms do.

As it follows from the comparisons shown in Figure \ref{fig:ic_ISNR}, the FBF method suffers because of its additional forward step. 
However, many time-intensive steps in this algorithm could have been executed in parallel, which would lead to a significant decrease of the execution time. 
On the other hand, the FB method performs fast and stable in both examples, while optical differences in the reconstructions for ($\ell_2^2$-IC/P) and ($\ell_2^2$-MIC/P) are not observable.

\small

\end{document}